\newtheorem{theorem}{Theorem}[section]
\newtheorem{lemma}[theorem]{Lemma}
\newtheorem{proposition}[theorem]{Proposition}
\theoremstyle{definition}
\newtheorem{definition}[theorem]{Definition}
\newtheorem{remark}[theorem]{Remark}
\newtheorem*{questions}{Questions}
\newtheorem{example}[theorem]{Example}
\theoremstyle{remark}
\renewcommand{\theclaim}{\textup{\theclaim}}
\newtheorem*{acknowledgements}{Acknowledgements}
\numberwithin{equation}{section}
\def\openone%{\hbox{\upshape \small1\kern-3.3pt\normalsize1}}
\newbox\ipbox
\newcommand{\ip}[2]{\left\langle #1\, , \,#2\right\rangle}
\newcommand{\diracb}[1]{\left\langle #1\mathrel{\mathchoice

{\setbox\ipbox=\hbox{$\displaystyle \left\langle\mathstrut
#1\right.$}

\vrule height\ht\ipbox width0.25pt depth\dp\ipbox}

{\setbox\ipbox=\hbox{$\textstyle \left\langle\mathstrut
#1\right.$}

\vrule height\ht\ipbox width0.25pt depth\dp\ipbox}

{\setbox\ipbox=\hbox{$\scriptstyle \left\langle\mathstrut
#1\right.$}

\vrule height\ht\ipbox width0.25pt depth\dp\ipbox}

{\setbox\ipbox=\hbox{$\scriptscriptstyle \left\langle\mathstrut
#1\right.$}

\vrule height\ht\ipbox width0.25pt depth\dp\ipbox}

}\right. }
\newcommand{\dirack}[1]{\left. \mathrel{\mathchoice

{\setbox\ipbox=\hbox{$\displaystyle \left.\mathstrut
#1\right\rangle$}

\vrule height\ht\ipbox width0.25pt depth\dp\ipbox}

{\setbox\ipbox=\hbox{$\textstyle \left.\mathstrut
#1\right\rangle$}

\vrule height\ht\ipbox width0.25pt depth\dp\ipbox}

{\setbox\ipbox=\hbox{$\scriptstyle \left.\mathstrut
#1\right\rangle$}

\vrule height\ht\ipbox width0.25pt depth\dp\ipbox}

{\setbox\ipbox=\hbox{$\scriptscriptstyle \left.\mathstrut
#1\right\rangle$}

\vrule height\ht\ipbox width0.25pt depth\dp\ipbox}

} #1\right\rangle}
\newcommand{\cj}[1]{\overline{#1}}
\newcommand{\bz}{\mathbb{Z}}
\newcommand{\M}{\mathcal{M}}
\newcommand{\br}{\mathbb{R}}
\newcommand{\bd}{\mathbb{D}}
\newcommand{\bc}{\mathbb{C}}
\newcommand{\bt}{\mathbb{T}}
\newcommand{\bn}{\mathbb{N}}
\def\blfootnote{\xdef\@thefnmark{}\@footnotetext}
\newcommand{\supp}[1]{\text{supp} (#1)}
\def\-{^{-1}}
\def\ty{\emptyset}
\begin{document}

\title[Affine fractals as boundaries and their harmonic analysis]{Affine fractals as boundaries and their harmonic analysis}
\author{Dorin Ervin Dutkay}
\blfootnote{Supported in part by the National Science Foundation.}
\address{[Dorin Ervin Dutkay] University of Central Florida\\
	Department of Mathematics\\
	4000 Central Florida Blvd.\\
	P.O. Box 161364\\
	Orlando, FL 32816-1364\\
U.S.A.\\} \email{ddutkay@mail.ucf.edu}

\author{Palle E.T. Jorgensen}
\address{[Palle E.T. Jorgensen]University of Iowa\\
Department of Mathematics\\
14 MacLean Hall\\
Iowa City, IA 52242-1419\\}\email{jorgen@math.uiowa.edu}

\thanks{} 
\subjclass[2000]{47B32, 42B05, 28A35, 26A33,  62L20.}
\keywords{Affine fractal, Cantor set, Cantor measure, iterated function system, Hilbert space, Fourier bases.}

\begin{abstract}
     We introduce the notion of boundary representation for fractal Fourier expansions, starting with a familiar notion of spectral pairs for affine fractal measures. Specializing to one dimension, we establish boundary representations for these fractals. We prove that as sets these fractals arise as boundaries of functions in closed subspaces of the Hardy space $H^2$. By this we mean that there are lacunary subsets $\Gamma$ of the non-negative integers, and associated closed $\Gamma$-subspace in the Hardy space $H^2(\bd)$, $\bd$ denoting the disk, such that for every function $f$ in in $H^2(\Gamma)$, and for every point $z$ in $\bd$, $f(z)$ admits a boundary integral represented by an associated  measure $\mu$, with integration over $\supp{\mu}$ placed as a Cantor subset on the circle $\bt := \mbox{bd}(\bd)$.

    We study families of pairs:  measures $\mu$ and sets $\Gamma$ of lacunary form, admitting lacunary Fourier series in $L^2(\mu)$;  i.e., configurations $\Gamma$  arranged with a geometric progression of empty spacing, or missing parts, gaps. Given $\Gamma$, we find corresponding generalized Szeg\" o kernels $G_\Gamma$, and we compare them to the classical Szeg\" o kernel for $\bd$.

     Rather than the more traditional approach of starting with $\mu$, and then asking for  possibilities for sets $\Gamma$,  such that we get Fourier series representations, we turn the problem upside down; now starting instead with a countably infinite discrete subset $\Gamma$, and, within a new duality framework, we study the possibilities for choices of measures $\mu$. 
\end{abstract}
\maketitle \tableofcontents
\section{Introduction}\label{intr}
In earlier papers, a number of authors studied a family of fractals $X$,
and associated measures $\mu$ which arise as limits of iterated function
systems (IFS). This framework includes for example infinite convolutions,
and therefore Bernoulli measures.

The starting point is a finite family $F$ of affine contractive mappings, and
the measure $\mu$ then results as a consequence of a procedure of Hutchinson
\cite{Hut81}. The fractal $X$ will be the support of $\mu$. When the family $F$,
is suitably restricted, it was shown in \cite{JoPe98,MR1785282,MR1655832,MR1929508,DJ06} that the Hilbert
space $L^2(\mu)$ then possesses a Fourier basis of orthogonal exponentials
$\{e_\lambda : \lambda\in\Lambda\}$. The set $\Lambda$ of exponentials in such an orthogonal basis
will be called the {\it spectrum} of $\mu$.

  When a spectrum $\Lambda$ exists we say that $(\mu, \Lambda)$ is a {\it spectral
pair}, and there is a variety of results dealing with inverse spectral theory
in this setting. Indeed, these results have many applications as they open
up for the use of tools from Fourier analysis in the study of this family of
fractals. While the procedure was developed for fractal measures $\mu$ with
compact support in $\br^d$, for any $d$, there are a number of features that set
aside the case $d=1$, which will be the focus here. In this case, a
normalization may be chosen in such a way that the spectrum $\Lambda$ is
contained in the non-negative integers $\bn_0$. So when $(\mu, \Lambda)$ is a
spectral pair, and $\Lambda$ is chosen in this way, we get a natural isometric
embedding of $L^2(\mu)$ into the Hardy space $H^2(\bd)$ of analytic functions on
the complex disk $\bd$.

   In this paper we deal with the resulting boundary representations. This
study requires tools different from the classical theory. To see this note
that the support $X$ of $\mu$ may be placed on the boundary $\bt$ (one-torus) of the
disk. But in the fractal cases, $X$ has Lebesgue measure zero; recall the
normalized Lebesgue measure is Haar measure of $\bt$. By contrast, the classical boundary
limits for functions in $H^2$ (Markov-Primalov-Fatou) yield only boundary
limits almost everywhere (a.e) w.r.t. Lebesgue measure on $\bt$. Indeed, our
measures $\mu$ are typically singular with respect to Lebesgue measure, and
have Lebesgue measure zero. Nonetheless we prove that the fractals arise as
boundaries of closed subspaces $H^2(\Lambda)$ in the Hardy space $H^2$. To do
this we develop a family of reproducing kernels needed for the purpose. Our
kernels have infinite product representations.

A separate motivation for our paper comes from the study of systems of frame vectors in Hilbert space. Frames generalize more familiar notions of bases in Fourier analysis; see for example \cite{CaFi09,CaWe08}. Our focus here is on the case when both the Hilbert space and the choice of vectors are restricted. We take $L^2(\mu)$ for Hilbert space, and we take the vectors (functions) in $L^2(\mu)$ to be the familiar complex exponentials of Fourier analysis; hence Fourier frames. In some cases, we will arrive at orthogonal families, and in others not.

      It was recently discovered that an important problem in operator algebras, the Kadison-Singer conjecture \cite{KS59} is equivalent to intriguing open problems for frames, many with direct applications to signal processing; see e.g., \cite{CaWe08}; and further section 4 below for further details.

      Our present restricted context for frame computations appears to be a fertile ground for generating the kind of singular frames that are likely to have a bearing on Kadison-Singer in its frame incarnations. There are relatively more technical details involved in the search for examples of Fourier frames satisfying one or the other in the list of a priori frame estimates in the literature.  While our main results regarding boundary representations are of independent interest, we hope that they will also serve to throw light on important questions regarding Fourier frames.

We will use the following definitions:
\begin{definition}\label{defaf}
Let $R$ be a $d\times d$ expansive real matrix, i.e., all its eigenvalues have absolute value strictly bigger than one. Let $B$ be a finite subset of $\br^d$. We define the {\it affine iterated function system (IFS)} denoted $(R,B)$:
\begin{equation}
\tau_b(x)=R^{-1}(x+b),\quad(x\in\br^d,b\in B)
\label{eqaf1}
\end{equation}
The unique Borel probability measure $\mu_B$ with the property that 
\begin{equation}
\mu_B(E)=\frac{1}{\#B}\sum_{b\in B}\mu_B(\tau_b^{-1}(E)),
\label{eqaf2}
\end{equation}
for all Borel sets in $\br^d$ is called {\it the invariant measure} for the affine IFS $(R,B)$ (see \cite{Hut81}) for details. 
\end{definition}

\begin{definition}\label{defhad}
Let $R$ be a $d\times d$ matrix, and $B$, $L$ two finite subsets of $\br^d$. We call $(R,B,L)$ a {\it Hadamard system} if $\#B=\#L$ and the matrix 
\begin{equation}
\frac{1}{\sqrt{\#B}}\left(e^{2\pi i R^{-1}b\cdot l}\right)_{b\in B, l\in L}
\label{eqhad1}
\end{equation}
is unitary.
\end{definition}

\section{Kernels for subspaces of the Hardy space}\label{sec2}
 In this section we introduce the notion of boundary representation, and we prove that spectral pairs in one dimension admit  such representations. By this we mean that when a spectral pair $(\mu, \Gamma)$, in a general class, is given, then for every function $f$ in the $\Gamma$-subspace in the Hardy space $H^2$ of the disk $\bd$, and for every point $z$ in $\bd$, $f(z)$ admits a representation by a $\Gamma$-Szeg\" o kernel $G_\Gamma$, with integration over $\supp{\mu}$ placed as a Cantor subset on the circle $\bt := \mbox{bd}(\bd)$. Thus $\supp{\mu}$ placed on $\bt$ will be a boundary for the subspace $H^2(\Gamma)$, and integration is with respect to the fractal measure $\mu$ from the spectral pair.

   We then turn to families of spectral pairs given by sets $\Gamma$ of lacunary form, i.e., configurations arranged with a geometric progression of empty spacing, or a missing parts, gaps; lacunary Fourier series. For this case we show that our Szeg\" o kernel $G_\Gamma$ arises as a factor in the familiar and classical Szeg\" o kernel for $\bd$.
\begin{definition}\label{def1}
Following \cite{Arv98} and \cite{Rud87} we set $H^2=H^2(\bd)$ the space of analytic functions in $\bd$ 

$$f(z)=c_0+c_1z+c_2z^2+\dots,\quad(z\in\bd)$$
such that 
$$\sum_{n\in\bn_0}|c_n|^2=:\|f\|_{H^2}^2<\infty.$$
With the Szeg\" o kernel
$$k(z,\xi):=\frac{1}{1-\cj z\xi}\, (\in H^2), z,\xi\in\bd;$$
we then get 
\begin{equation}
f(z)=\ip{k(z,\cdot)}{f}_{H^2}
\label{eqd1.1}
\end{equation}
valid for all $f\in H^2$ and all $z\in\bd$. The relation \eqref{eqd1.1} is a simple instance of a reproducing kernel property. For the theory of reproducing kernels; see \cite{Aro50}, and also \cite{Arv98, AlLe08,ADV09} for a variety of applications.
\end{definition}

\begin{theorem}\label{th1}
Let $\mu$ be a probability measure on $\br$ and assume $\Gamma\subset \bn_0:=\{0,1,2,\dots\}$ is a spectrum for $\mu$. Then 
\begin{enumerate}
	\item The map $J:L^2(\mu)\rightarrow H^2$
	\begin{equation}
Je_\gamma=z^\gamma,\quad (\gamma\in\Gamma)
\label{eq1.1}
\end{equation}
extends to an isometric embedding of $L^2(\mu)$ into $H^2$. 
\item Define the map $G$ on $\bd\times\br$
\begin{equation}
G(z,x):=\sum_{\gamma\in\Gamma}\cj{z}^\gamma e_\gamma(x),\quad(z\in\bd,x\in\br)
\label{eq1.2}
\end{equation}
Then 
\begin{equation}
(Jf)(z)=\int f(x)\cj G(z,x)\,d\mu(x)=\ip{G(z,\cdot)}{f}_{L^2(\mu)},\quad(z\in\bd)
\label{eq1.3}
\end{equation}
\item Assume in addition that $\Gamma=R\Gamma+L$ for some $R\in\bn, R\geq 2$ and some finite set $L\subset \bn_0$ such that no two elements in $L$ are congruent modulo $R$. Then 
\begin{equation}
\cj G(z,x)=\prod_{n=0}^\infty\left(\sum_{l\in L}z^{R^nl}\cj e_l(R^nx)\right),\quad(z\in\bd,x\in\br).
\label{eq1.4}
\end{equation}
The infinite product is uniformly convergent for $z$ in a compact subsets of $\bd$ and $x\in\br$.
\end{enumerate}
\end{theorem}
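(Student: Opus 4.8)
The plan is to dispatch (i) and (ii) quickly and concentrate on the combinatorics behind (iii). For (i), since $\Gamma$ is a spectrum the family $\{e_\gamma:\gamma\in\Gamma\}$ is an orthonormal basis of $L^2(\mu)$, whereas $\{z^\gamma:\gamma\in\Gamma\}$ is an orthonormal subset of $H^2$, being a subfamily of the monomial basis $\{z^n:n\in\bn_0\}$. A linear map carrying one orthonormal basis onto an orthonormal set extends uniquely to an isometry, which is exactly \eqref{eq1.1}. For (ii), I would expand $f=\sum_{\gamma\in\Gamma}\ip{e_\gamma}{f}e_\gamma$, apply $J$ termwise to get $(Jf)(z)=\sum_{\gamma\in\Gamma}\ip{e_\gamma}{f}z^\gamma$, and then recognize this same series as $\ip{G(z,\cdot)}{f}_{L^2(\mu)}$ upon inserting the definition \eqref{eq1.2} and writing $\ip{e_\gamma}{f}=\int\cj{e_\gamma}\,f\,d\mu$; the interchange of sum and integral is justified by square-summability of the Fourier coefficients. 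This gives \eqref{eq1.3}.

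For (iii) I would first record that the self-similarity forces $0\in\Gamma$ and $0\in L$: taking minima in $\Gamma=R\Gamma+L$ inside $\bn_0$ yields $\min\Gamma=R\min\Gamma+\min L$, so $\min L=-(R-1)\min\Gamma\le 0$, and since both sets lie in $\bn_0$ both minima must vanish. Next, using $\cj e_l(R^nx)=\cj e_{R^nl}(x)$, the $n$-th factor becomes $\sum_{l\in L}z^{R^nl}\cj e_{R^nl}(x)$, so distributing the finite product $\prod_{n=0}^{N-1}$ produces the sum over tuples $(l_0,\dots,l_{N-1})\in L^N$ of $z^{\gamma}\cj e_{\gamma}(x)$ with $\gamma=\sum_{n=0}^{N-1}R^nl_n$. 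Here the hypothesis that no two elements of $L$ are congruent modulo $R$ is precisely what I need: reducing $\gamma$ modulo $R$ pins down $l_0$, then dividing by $R$ and inducting pins down the remaining digits, so the digit map $(l_n)\mapsto\sum_n R^nl_n$ is injective and the partial product equals $\sum_{\gamma\in\Gamma_N}z^{\gamma}\cj e_\gamma(x)$, where $\Gamma_N:=\sum_{n=0}^{N-1}R^nL$.

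It then remains to identify these as the partial sums of $\cj G$ along an exhaustion of $\Gamma$, and to check convergence. Iterating the relation gives $\Gamma=R^N\Gamma+\Gamma_N$; since $0\in\Gamma$ this shows $\Gamma_N\subseteq\Gamma$, and since $0\in L$ it shows $\Gamma_N\subseteq\Gamma_{N+1}$. That $\bigcup_N\Gamma_N=\Gamma$ follows because any $\gamma\in\Gamma$ decomposes as $\gamma=R\gamma_1+l_0$ with $\gamma_1\in\Gamma$ and $\gamma_1\le\gamma/R<\gamma$ for $\gamma\ge 1$, so the recursion terminates at $0$ and exhibits a finite base-$R$ expansion of $\gamma$ with digits in $L$. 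Finally, because $|e_\gamma(x)|=1$ and $\Gamma\subseteq\bn_0$, the series $\cj G(z,x)=\sum_{\gamma\in\Gamma}z^\gamma\cj e_\gamma(x)$ is dominated termwise by $\sum_{n\ge 0}|z|^n=(1-|z|)^{-1}$, hence converges absolutely, and uniformly on $\{|z|\le r\}\times\br$ for each $r<1$ by the Weierstrass $M$-test. Since $\Gamma_N\uparrow\Gamma$, the partial products converge uniformly on compacta of $\bd\times\br$ to $\cj G$, which is \eqref{eq1.4}.

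The main obstacle is the bookkeeping in (iii): verifying that expanding the finite product reproduces the partial sum over $\Gamma_N$ with neither repetition nor omission. This is exactly the statement that the base-$R$ digit map is a bijection onto $\Gamma_N$, so the mod-$R$ distinctness hypothesis (for injectivity), together with the forced $0\in L$ and the exhaustion $\Gamma_N\uparrow\Gamma$ (for surjectivity onto $\Gamma$ in the limit), are the crucial ingredients; everything else is either standard Hilbert-space theory or a routine geometric-series estimate.
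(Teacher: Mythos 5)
Your proposal is correct, but part (iii) follows a genuinely different route from the paper's. The paper never expands the partial products combinatorially; instead it uses the unique decomposition $\gamma=R\gamma'+l$ (forced by the mod-$R$ incongruence of $L$) once to derive the functional equation $\cj G(z,x)=\left(\sum_{l\in L}z^l\cj{e_l(x)}\right)\cj G(z^R,Rx)$, then iterates this identity and controls the remainder factor via the estimate $|G(z,x)-1|\leq |z|/(1-|z|)$, which shows that $\cj G(z^{R^N},R^Nx)\to 1$ exponentially fast, uniformly for $z$ in compacta of $\bd$ and $x\in\br$. That top-down argument carries the full sum over $\Gamma$ along at every step, so it never needs to establish that $\bigcup_N\Gamma_N=\Gamma$; your bottom-up argument must prove both the injectivity of the digit map and this exhaustion (which you do correctly, via the terminating division algorithm), but in exchange it yields the explicit structural description $\Gamma=\left\{\sum_n R^nl_n : l_n\in L\right\}$ --- precisely the set $\Gamma(L)$ that reappears in Theorem \ref{th5}(iii) --- and identifies each partial product exactly as the partial sum of $\cj G$ over $\Gamma_N$, rather than leaving the limit identification implicit in an iteration. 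Your minima argument extracting both $0\in\Gamma$ and $0\in L$ from $\min\Gamma=R\min\Gamma+\min L$ is also cleaner than the paper's, which derives only $0\in\Gamma$. For part (ii) the difference is smaller but real: the paper defines $G(z,\cdot):=J^*k_z$ with $k_z$ the Szeg\"{o} kernel \eqref{eqp1.1} and then computes its ONB expansion by pairing against the $e_\gamma$, whereas you expand $f$ in the ONB and apply $J$ termwise; both are routine, though the paper's formulation exhibits the identity $G_\Gamma(z,\cdot)=J^*k_z$, which it reuses in Proposition \ref{prr3} and Remark \ref{remr3}.
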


\begin{proof}
Since $\{e_\gamma : \gamma\in\Gamma\}$ is an orthonormal basis in $L^2(\mu)$, (i) follows immediately.

(ii) Let 
\begin{equation}
k_z(\xi)=\frac{1}{1-\cj{z} \xi}\quad(z\in\bd,\xi\in\bt)
\label{eqp1.1}
\end{equation}
be the Szeg\" o kernel. We know that functions $F\in H^2$, can be recovered from their boundary values $F^\sharp$ by
\begin{equation}
F(z)=\int_{\bt}\cj{k_z(\xi)}F^\sharp(\xi)\,d\xi=\ip{k_z(\cdot)}{F}_{H^2},\quad(z\in\bd)
\label{eqp1.2} 
\end{equation}

Let $G(z,\cdot):=(J^*k_z)(\cdot)$ for $z\in\bd$. We have for $z\in\bd$:
$$\ip{G(z,\cdot)}{f}_{L^2(\mu)}=\ip{J^*k_z}{f}_{L^2(\mu)}=\ip{k_z}{Jf}_{H^2}=(Jf)(z).$$

It remains to prove \eqref{eq1.2}.

For $\gamma\in\Gamma$ and $z'\in\bd$:
$$\ip{e_\gamma}{G(z',\cdot)}_{L^2(\mu)}=\ip{e_\gamma}{J^*k_{z'}}_{L^2(\mu)}=\ip{Je_\gamma}{k_{z'}}_{H^2}=\ip{z^\gamma}{k_{z'}}_{H^2}=\cj{(z')^\gamma}.$$
Thus
$$G(z',\cdot)=\sum_{\gamma\in\Gamma}\cj{(z')^\gamma}e_\gamma(\cdot).$$

(iii) The condition implies that $0\in \Gamma$; otherwise take $a=\min\Gamma$ and since $a=Ra'+l$, we must have $ a'\leq a$, $a'\in\Gamma$, so $a=a'=0$. Since the elements of $L$ are incongruent modulo $R$, it follows that every $\gamma\in\Gamma$ can be written {\it uniquely} as $\gamma=R\gamma'+l$ for some $\gamma'\in\Gamma$ and $l\in L$. Then we have
$$\cj G(z,x)=\sum_{l\in L}\sum_{\gamma\in\Gamma}z^{R\gamma+l}\cj{e_{R\gamma+l}(x)}=\sum_{l\in L}z^l\cj{e_l(x)}\sum_{\gamma\in\Gamma}z^{R\gamma}\cj{e_{\gamma}(Rx)}=\left(\sum_{l\in L}z^l\cj{e_l(x)}\right)G(z^R,Rx).$$

Since 
$$|G(z,x)-1|\leq\sum_{\gamma\in\Gamma\setminus{0}}|z|^\gamma\leq\sum_{n\geq1}|z|^n=\frac{z}{1-|z|}$$
for all $z\in\bd$ and $x\in\br$ it follows that $G(z,x)-1$ converges to $0$ as $z\rightarrow0$ (since $0\in\Gamma$), uniformly in $x\in\br$.

Iterating the previous equality, and since $G(z^{R^n},x)$ converges to $1$ exponentially fast and uniformly for $z$ in a compact subset of $\bd$ and for $x\in\br$, (iii) follows.

\end{proof}

\begin{definition}\label{def2}
(i) Let $X$ be a compact subset of $[0,1]$. We shall also consider $X$ as a subset of $\bt=\br/\bz$ via the mapping $x\mapsto e_1(x)=e^{2\pi ix}$. We will further consider restrictions of functions $f$ defined on all of $\bc$ via the identification $f(e_1(x))=\tilde f(x)$ where $\tilde f$ is then a $\bz$-periodic function on the line $\br$, and we view both $\br$ and $\bt$ embedded in $\bc$ in the usual way. The notation $\tilde f$ will be implicit in the discussion below. 

(ii) Let $X$ be as above, and let $\mu$ be a Borel probability measure supported on $X$. Consider subsets $\Gamma$ of $\bn_0$. Set 
$$\mathfrak A_\Gamma:=\left\{{\sum_{\gamma\in\Gamma}}^{\mbox{(finite)}}c_\gamma z^\gamma : (c_\gamma)_{\gamma\in\Gamma}\mbox{ is a finite set of coefficients }\right\}.$$

(iii) We say that the pair $(\mu,\Gamma)$ has a {\it boundary representation} if there is a kernel function $k=k_{(\mu,\Gamma)}$ subject to the following conditions
\begin{enumerate}
\item[(a)] $k:\bd\times X\rightarrow\bc$.
\item[(b)] For all $z\in\bd$, $k(z,\cdot)\in L^2(X,\mu)$; and 
\item[(c)] For all $f\in \mathfrak A_\Gamma$, $z\in\bd$ we have 
\begin{equation}
f(z)=\int_X\cj{k(z,x)}\tilde f(x)\,d\mu(x)=\ip{k(z,\cdot)}{\tilde f}_{L^2(\mu)}.
\label{eq2.1}
\end{equation}
\end{enumerate}

We denote by $H^2(\Gamma)$ the subspace of $H^2$ spanned by the functions $z^\gamma$ with $\gamma\in\Gamma$.
\end{definition}

\begin{proposition}\label{pr3}
Let $(\mu,\Gamma)$ be a spectral pair, and assume that $\Gamma\subset\bn_0$; then this pair has a boundary representation with kernel $k=k_{(\mu,\Gamma)}$ given by 
\begin{equation}
\cj k(z,x)=\sum_{\gamma\in\Gamma} \cj{e_\gamma(x)}z^\gamma.
\label{eq2.2}
\end{equation}
Moreover then
\begin{equation}
\ip{k_z}{k_w}_{L^2(\mu)}=\sum_{\gamma\in\Gamma}(z\cj w)^{\gamma},\quad(z,w\in\bd).
\label{eq2.2.1}
\end{equation}

The representation \eqref{eq2.1} for functions in $\mathfrak A_\Gamma$ extends to $f\in H^2(\Gamma)$; moreover then $\tilde f\in L^2(X,\mu)$, and 
\begin{equation}
\|f\|_{H^2}=\|\tilde f\|_{L^2(\mu)}.
\label{eq2.6}
\end{equation}
\end{proposition}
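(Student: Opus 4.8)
The plan is to verify the three assertions of Proposition~\ref{pr3} by assembling pieces that are already in place. The formula \eqref{eq2.2} for the kernel is immediate: since $(\mu,\Gamma)$ is a spectral pair with $\Gamma\subset\bn_0$, Theorem~\ref{th1} applies, and the kernel $G(z,\cdot)$ constructed there satisfies exactly $\cj G(z,x)=\sum_{\gamma\in\Gamma}\cj{e_\gamma(x)}z^\gamma$ by \eqref{eq1.2}. So I would set $k_{(\mu,\Gamma)}:=G$ and observe that conditions (a), (b), (c) of Definition~\ref{def2} hold: (a) is clear from the domain of $G$; (b) holds because $\{e_\gamma:\gamma\in\Gamma\}$ is orthonormal in $L^2(\mu)$, so $\|k(z,\cdot)\|_{L^2(\mu)}^2=\sum_{\gamma\in\Gamma}|z|^{2\gamma}<\infty$ for $z\in\bd$; and (c) is the reproducing identity \eqref{eq1.3} of Theorem~\ref{th1}, read for $f\in\mathfrak A_\Gamma$ together with the convention $f(e_1(x))=\tilde f(x)$ of Definition~\ref{def2}(i).

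Next I would establish the inner-product formula \eqref{eq2.2.1}. This is a direct computation using the expansion \eqref{eq2.2} and orthonormality:
\begin{equation}
\ip{k_z}{k_w}_{L^2(\mu)}=\sum_{\gamma\in\Gamma}\sum_{\gamma'\in\Gamma}\cj{z}^{\gamma}w^{\gamma'}\ip{e_\gamma}{e_{\gamma'}}_{L^2(\mu)}=\sum_{\gamma\in\Gamma}(z\cj w)^{\gamma},
\end{equation}
where I must be careful to track which variable is conjugated, matching the convention in \eqref{eqd1.1} and \eqref{eqp1.2}; the orthonormality of the exponentials collapses the double sum to the diagonal. The interchange of sum and integral is justified by the same absolute convergence that gave (b).

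Finally, for the extension to all of $H^2(\Gamma)$ and the isometry \eqref{eq2.6}, the cleanest route is to invoke part (i) of Theorem~\ref{th1}: the map $J$ is an isometric embedding of $L^2(\mu)$ into $H^2$ with $Je_\gamma=z^\gamma$, so its range is precisely $\Span\{z^\gamma:\gamma\in\Gamma\}=H^2(\Gamma)$. Given $f\in H^2(\Gamma)$, there is a unique $g\in L^2(\mu)$ with $Jg=f$, and $g=\tilde f$ after the identification; then $\|f\|_{H^2}=\|Jg\|_{H^2}=\|g\|_{L^2(\mu)}=\|\tilde f\|_{L^2(\mu)}$, which is \eqref{eq2.6}. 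The reproducing identity \eqref{eq2.1} extends from $\mathfrak A_\Gamma$ to $H^2(\Gamma)$ by continuity: both sides are continuous in $f$ in the $H^2$-norm (the left side because point evaluation at $z\in\bd$ is $H^2$-continuous via the Szeg\"o kernel, the right side because $k(z,\cdot)\in L^2(\mu)$ and $J$ is an isometry), and $\mathfrak A_\Gamma$ is dense in $H^2(\Gamma)$.

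The step I expect to require the most care is the extension argument: one must confirm that the candidate boundary function $\tilde f$ for a general $f\in H^2(\Gamma)$ is genuinely the $J$-preimage of $f$ and lies in $L^2(X,\mu)$, rather than merely a formal object. The key point making this work is that $\mathfrak A_\Gamma$ maps under $J^{-1}$ onto the span of $\{e_\gamma\}$, which is dense in $L^2(\mu)$, so passing to the $H^2$-limit on one side corresponds exactly to the $L^2(\mu)$-limit on the other; the two norms agree along the way by the isometry, so no convergence is lost. Everything else is bookkeeping with the identification $f\mapsto\tilde f$ and the orthonormality of the spectrum.
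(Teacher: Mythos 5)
Your proposal is correct and follows essentially the same route as the paper: the paper also derives the kernel and reproducing property from the spectral-pair/Parseval structure (explicitly invoking the proof of Theorem \ref{th1} \emph{mutatis mutandis}), obtains \eqref{eq2.6} from the isometry, and its explicit $\epsilon$-truncation estimate with $f_F=\sum_{\gamma\in F}c_\gamma z^\gamma$ is precisely your density-plus-continuity argument, with $\tilde f$ for general $f\in H^2(\Gamma)$ defined as the $L^2(\mu)$-limit $\sum_\gamma c_\gamma e_\gamma$, i.e.\ your $J^{-1}f$. One cosmetic remark: in your intermediate double sum for \eqref{eq2.2.1} the conjugations are reversed (with the paper's convention, conjugate-linear in the first slot, the expansion reads $\sum_{\gamma,\gamma'}z^\gamma\cj{w}^{\gamma'}\ip{e_\gamma}{e_{\gamma'}}_{L^2(\mu)}$), although your final formula $\sum_{\gamma\in\Gamma}(z\cj w)^\gamma$ is the correct one.
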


\begin{proof}
If $f\in\mathfrak A_\Gamma$, we set $f(z)=\sum_{\gamma\in\Gamma}c_\gamma z^\gamma$, and note that the corresponding periodic function $\tilde f$ (as a restriction) satisfies 
\begin{equation}
\tilde f(x)=\sum_{\gamma\in\Gamma}c_\gamma e_\gamma(x).
\label{eq2.3}
\end{equation}
But then by restrictions $\tilde f\in L^2(X,\mu)$ and 
\begin{equation}
c_\gamma=\int_X \cj{e_\gamma(x)}\tilde f(x)\,d\mu(x),\quad(\gamma\in\Gamma)
\label{eq2.4}
\end{equation}
This is the $L^2(\mu)$-Fourier expansion implied by the assumption that $(\mu,\Gamma)$ is a spectral pair. Since the sum in \eqref{eq2.3} is finite, substitution of \eqref{eq2.4} yields
\begin{equation}
f(z)=\sum_\gamma c_\gamma z^\gamma=\sum_\gamma\int_X\cj{e_\gamma(x)}\tilde f(x)\,d\mu(x)\, z^\gamma=\int_X\sum_\gamma\cj{e_\gamma(x)}z^\gamma\tilde f(x)\,d\mu(x)=\int_X\cj{k(z,x)}\tilde f(x)\,d\mu(x),
\label{eq2.5}
\end{equation}
which is the kernel representation. 

The formula \eqref{eq2.2.1} follows if we make use of the ONB property of $\{e_\gamma : \gamma\in\Gamma\}$.

The argument further shows that formula \eqref{eq2.2} is the unique kernel function. The remaining properties follow from an application {\it mutatis mutandis} of the details in the proof of Theorem \ref{th1} above.

For functions $f\in H^2(\Gamma)$ by definition we have the unique representation
\begin{equation}
f(z)=\sum_{\gamma\in\Gamma}c_\gamma z^\gamma,\quad(z\in\bd)
\label{eq2.7}
\end{equation}
with
\begin{equation}
\|f\|_{H^2}^2=\sum_{\gamma\in\Gamma}|c_\gamma|^2<\infty.
\label{eq2.8}
\end{equation}
But since $(\mu,\Gamma)$ is a spectral pair, we have (by Parseval applied to $L^2(\mu)$):
\begin{equation}
\sum_{\gamma\in\Gamma}|c_\gamma|^2=\int_X|\tilde f|^2\,d\mu
\label{eq2.9}
\end{equation}
and
\begin{equation}
\tilde f(x)=\sum_{\gamma\in\Gamma}c_\gamma e_\gamma(x)
\label{eq2.10}
\end{equation}
holds as an $L^2(\mu)$-identity. This implies also \eqref{eq2.6}.

But \eqref{eq2.5} also holds $\mu$-a.e. when we pass to truncated summations on the right-hand side in \eqref{eq2.10}.

It remains to justify the exchange of summation and integration for the computation \eqref{eq2.5} when $f$ is now in $H^2(\Gamma)$($\subset H^2$). 

Now let $f\in H^2(\Gamma)$, and let $(c_\gamma)_{\gamma\in\Gamma}$ be the corresponding coefficients, see \eqref{eq2.4} and \eqref{eq2.7}. Using again Parseval in the form \eqref{eq2.6}, if $\epsilon>0$, there is a finite subset $F\subset\Gamma$ such that 
\begin{equation}
\sum_{\gamma\in\Gamma\setminus F}|c_\gamma|^2<\epsilon.
\label{eq2.11}
\end{equation}
Let $ f_F=\sum_{\gamma\in F} c_\gamma z^\gamma$. Then for $|z|<1$, using \eqref{eq2.1} for $f_F$:
$$\left|\int \cj k(z,x)\tilde f(x)\,d\mu(x)-\sum_{\gamma\in F}c_\gamma z^\gamma\right|\leq\left|\int \cj k(z,x)\tilde f(x)\,d\mu(x)-\int \cj k(z,x)\tilde f_F(x)\,d\mu(x)\right|$$$$+\left|\int \cj k(z,x)\tilde f_F(x)\,d\mu(x)-\sum_{\gamma\in F}c_\gamma z^\gamma\right|$$
$$\leq \|k_z\|_{L^2(\mu)}\|\tilde f-\tilde f_F\|_{L^2(\mu)}+0=\|k_z\|_{L^2(\mu)}\|f-f_F\|_{H^2}\rightarrow 0\mbox{ as } F\nearrow\Gamma.$$

\end{proof}

\begin{example}\label{rem4}
 There are differences between the boundary representation in the two cases, classical vs fractal, as we will see here with a simple example from \cite{JoPe98}. Referring to the Cantor construction with scale 4, we get a spectral pair $(\mu, \Gamma)$.  We consider the monomial $f(z) = z^2$ not  in the $\Gamma$-subspace subspace $H^2(\Gamma)$ in $H^2$ of the disk $\bd$. We sketch how $z^2$  is represented by the $\Gamma$-Szeg\" o kernel with integration over $\supp{\mu}$ placed as a Cantor subset on the circle $\bt$, and how it differs from the classical counterpart.

We caution that the representation of functions $f\in H^2(\Gamma)$ may differ from the more familiar $H^2$-boundary corresponding to the Haar (normalized Lebesgue) measure on $\bt$. The purpose of this example is to illustrate the significance of the isometric operator $J$ in \eqref{eq1.1} in Theorem \ref{th1}. Indeed the simple formula \eqref{eq1.1} is only valid for $\gamma\in\Gamma$. If $n\in\bn_0\setminus\Gamma$, then the function $e_n$ will typically be the boundary for a function different than $z^n$. 

To see this, take $(\mu,\Gamma)$ as follows: let $\mu$ be the invariant measure for the affine IFS with $R=4$ and $B=\{-1,1\}$, (see Definition \ref{defaf}  below) and let $\Gamma:=\{\sum_{k=0}^n 4^kl_k : l_k\in\{0,1\}, n\in\bn_0\}$. Then $(\mu,\Gamma)$ is a spectral pair \cite{JoPe98} and, for the Fourier transform, we have
\begin{equation}
\widehat\mu(t)=\prod_{n=1}^\infty\cos\left(\frac{2\pi t}{4^n}\right),\quad(t\in\br)
\label{eqr5.2}
\end{equation}
But then
$$J(e_2)=J\left(\sum_{\gamma\in\Gamma}\ip{e_\gamma}{e_2}e_\gamma\right)=\sum_{\gamma\in\Gamma}\widehat\mu(2-\gamma)z^\gamma\neq z^2.$$
A simple inspection of \eqref{eqr5.2} shows that $\widehat\mu(x)=\widehat\mu(-x)$ for $x\in\br$, $\widehat\mu(2)<0$, $\widehat\mu(2-16)\neq 0$ etc. Moreover, $\widehat\mu$ vanishes on odd integers.
\end{example}

\begin{definition}\label{def2.8}
 We say that $\Gamma$ is a {\it Riesz sequence} if there are constants $0<A_0\leq A_1<\infty$ such that: For all finite subsets of $\Gamma$ and all finitely indexed subsets $\{c_\gamma\}\subset\bc$, we have 
\begin{equation}
A_0\sum_{\gamma\in\Gamma}|c_\gamma|^2\leq \sum_{\gamma,\gamma'\in\Gamma}\cj c_\gamma c_{\gamma'}\widehat\mu(\gamma'-\gamma)\leq A_1\sum_{\gamma\in\Gamma}|c_\gamma|^2.
\label{eqr1}
\end{equation}
See also Proposition \ref{pr5.1} below and \cite{DHW10}.
\end{definition}

Returning to the operator $J$ from \eqref{eq1.1}, but in a more general framework than Theorem \ref{th1}, we have the following 
\begin{lemma}\label{lemr1}
Let $\mu$ be a a probability measure supported on $\bt$, and let $\Gamma$ be a Riesz sequence, and set $Je_\gamma=z^\gamma$ for all $\gamma\in\Gamma$, see \eqref{eq1.1}. Then $J$ extends to a bounded operator $L^2(\mu)\rightarrow H^2$ if and only if the lower estimate in \eqref{eqr1} holds for some $A_0>0$. In that case 
\begin{equation}
\|J\|_{L^2(\mu)\rightarrow H^2}\leq A_0^{-\frac12}.
\label{eqr2}
\end{equation}
Moreover, there is a bounded inverse operator $J^{-1}:H^2(\Gamma)\rightarrow L^2(\mu)$, $J^{-1}(z_\gamma)=e_\gamma$, $(\gamma\in\Gamma)$ if and only if the upper estimate in \eqref{eqr1} holds for some $A_1<\infty$. In that case 
\begin{equation}
\|J^{-1}\|_{H^2(\Gamma)\rightarrow L^2(\mu)}\leq \sqrt{A_1}.
\label{eqr3}
\end{equation}

\end{lemma}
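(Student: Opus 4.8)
The plan is to reduce both equivalences to a single algebraic identity relating the three quantities appearing in \eqref{eqr1} to Hilbert-space norms, and then to read off the operator bounds directly from the two-sided estimate. First I would fix a finite subset $F\subset\Gamma$ and a finitely supported family $(c_\gamma)$, and set $f=\sum_{\gamma\in F}c_\gamma e_\gamma\in L^2(\mu)$ and $g=Jf=\sum_{\gamma\in F}c_\gamma z^\gamma\in H^2$. Since $\{z^\gamma\}$ is orthonormal in $H^2$, one has $\|g\|_{H^2}^2=\sum_{\gamma\in F}|c_\gamma|^2$. On the $L^2(\mu)$ side, expanding the norm and using $\ip{e_\gamma}{e_{\gamma'}}_{L^2(\mu)}=\int e^{2\pi i(\gamma'-\gamma)x}\,d\mu(x)=\widehat\mu(\gamma'-\gamma)$ gives $\|f\|_{L^2(\mu)}^2=\sum_{\gamma,\gamma'\in F}\cj c_\gamma c_{\gamma'}\widehat\mu(\gamma'-\gamma)$, which is precisely the middle term of \eqref{eqr1}. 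Hence \eqref{eqr1} is nothing but the pair of inequalities $A_0\|Jf\|_{H^2}^2\le\|f\|_{L^2(\mu)}^2$ and $\|f\|_{L^2(\mu)}^2\le A_1\|Jf\|_{H^2}^2$, valid on the dense domain of finite combinations.

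With this reformulation the two equivalences become immediate. For the first, the lower estimate $A_0\|Jf\|_{H^2}^2\le\|f\|_{L^2(\mu)}^2$ says exactly that $\|Jf\|_{H^2}\le A_0^{-1/2}\|f\|_{L^2(\mu)}$ on finite combinations; so $J$ is bounded there with norm at most $A_0^{-1/2}$ and extends by continuity to the closed span of $\{e_\gamma\}$, and then by zero on its orthogonal complement, giving \eqref{eqr2}. Conversely, if $J$ already extends to a bounded operator, then evaluating $\|Jf\|_{H^2}\le\|J\|\,\|f\|_{L^2(\mu)}$ on finite combinations recovers the lower estimate with $A_0=\|J\|^{-2}$. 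The second equivalence is the mirror image: on $H^2(\Gamma)$, the closed span of the orthonormal set $\{z^\gamma\}$, the map $J^{-1}z^\gamma=e_\gamma$ sends $g=\sum_\gamma c_\gamma z^\gamma$ to $\sum_\gamma c_\gamma e_\gamma$, and the upper estimate reads $\|J^{-1}g\|_{L^2(\mu)}^2\le A_1\|g\|_{H^2}^2$, yielding boundedness with the bound \eqref{eqr3}; the converse again follows by testing $\|J^{-1}\|$ on finite sums, giving $A_1=\|J^{-1}\|^2$.

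The points that require care — and where I would spend the writing — are well-definedness and the passage from finite combinations to the closed spaces, rather than any hard estimate. On the $H^2(\Gamma)$ side there is no issue, since $\{z^\gamma\}$ is genuinely orthonormal and the coefficients $c_\gamma$ of $g$ are uniquely determined. On the $L^2(\mu)$ side, however, $J$ is a priori only prescribed on finite linear combinations of the $e_\gamma$, and one must check that the assignment $\sum_\gamma c_\gamma e_\gamma\mapsto\sum_\gamma c_\gamma z^\gamma$ depends only on the element $f$ and not on the chosen coefficients; this is exactly guaranteed by the lower bound $A_0>0$, which forces the $e_\gamma$ to be linearly independent (if $\sum_\gamma c_\gamma e_\gamma=0$ then $A_0\sum_\gamma|c_\gamma|^2\le\|f\|_{L^2(\mu)}^2=0$). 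I would also make explicit the convention $\widehat\mu(t)=\int e^{2\pi itx}\,d\mu(x)$ used above, so that the Gram matrix $\ip{e_\gamma}{e_{\gamma'}}_{L^2(\mu)}$ matches $\widehat\mu(\gamma'-\gamma)$ with the correct orientation; this is the only place the precise normalization of the Fourier transform and of the inner product enters, and keeping it consistent is the main, if modest, bookkeeping obstacle.
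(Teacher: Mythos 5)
Your proposal is correct, and it is precisely the argument the paper has in mind: the paper's own ``proof'' consists of the single remark that this is standard operator theory, and your reduction of \eqref{eqr1} to the identity $A_0\|Jf\|_{H^2}^2\le\|f\|_{L^2(\mu)}^2\le A_1\|Jf\|_{H^2}^2$ on finite combinations, followed by extension by continuity, is exactly that standard argument written out. Your attention to well-definedness of $J$ (linear independence of the $e_\gamma$ forced by $A_0>0$) and to the normalization $\ip{e_\gamma}{e_{\gamma'}}_{L^2(\mu)}=\widehat\mu(\gamma'-\gamma)$ supplies details the paper omits, and both are handled correctly.
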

\begin{proof}
This is standard operator theory. 
\end{proof}

\begin{proposition}\label{prr3}
Let $\mu$ and $\Gamma$ be as specified above, and assume that the Riesz sequence estimate \eqref{eqr1} holds. Let $J$ and $J^{-1}$ be the two bounded operators from Lemma \ref{lemr1}. Then the measure space $(\supp{\mu},\mathcal B,\mu)$ offers a boundary representation for $H^2(\Gamma)(\subset H^2)$, i.e., we get for all $f\in H^2(\Gamma)$, and all points $z\in\bd$
\begin{equation}
f(z)=\int_{\supp{\mu}}\cj{(J^*k_z)}(x)(J^{-1}f)(x)\,d\mu(x)
\label{eqr4}
\end{equation}
where $k_z$ in \eqref{eqr4} denotes the Szeg\" o kernel \eqref{eqp1.1}
\end{proposition}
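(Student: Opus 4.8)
The plan is to deduce \eqref{eqr4} directly from the reproducing property of the Szeg\"o kernel together with the boundedness of $J$ and $J^{-1}$ supplied by Lemma \ref{lemr1}; no analysis beyond that lemma should be needed. The one algebraic fact I would isolate first is that $J^{-1}$ is a genuine right inverse of $J$ on the subspace $H^2(\Gamma)$, that is, $JJ^{-1}=\mathrm{id}_{H^2(\Gamma)}$. This is immediate on the dense subspace $\mathfrak A_\Gamma=\Span\{z^\gamma:\gamma\in\Gamma\}$, since the defining relations give $JJ^{-1}(z^\gamma)=J(e_\gamma)=z^\gamma$; because both $J$ and $J^{-1}$ are bounded under the two-sided estimate \eqref{eqr1}, the identity extends by continuity to all of $H^2(\Gamma)$.

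Granting this, the computation is a single line. Fix $f\in H^2(\Gamma)$ and $z\in\bd$. Since $H^2(\Gamma)\subset H^2$, the reproducing property \eqref{eqp1.2} of the Szeg\"o kernel gives $f(z)=\ip{k_z}{f}_{H^2}$. Substituting $f=J(J^{-1}f)$ and moving $J$ across the inner product to its adjoint yields
\[
f(z)=\ip{k_z}{J(J^{-1}f)}_{H^2}=\ip{J^*k_z}{J^{-1}f}_{L^2(\mu)}=\int_{\supp{\mu}}\cj{(J^*k_z)(x)}\,(J^{-1}f)(x)\,d\mu(x),
\]
which is precisely \eqref{eqr4}. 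The two integrability requirements needed to make sense of the integral are automatic: $J^*k_z\in L^2(\mu)$ because $J^*$ is bounded and $k_z\in H^2$, while $J^{-1}f\in L^2(\mu)$ because $J^{-1}$ is bounded on $H^2(\Gamma)$.

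I do not anticipate a genuine obstacle, as the substantive content was already absorbed into Lemma \ref{lemr1}, which converts the Riesz-sequence bound \eqref{eqr1} into operator boundedness. The only point deserving care is the asymmetry between $J$ and $J^{-1}$: one must invoke $JJ^{-1}=\mathrm{id}$ on $H^2(\Gamma)$ (a right inverse), and \emph{not} $J^{-1}J=\mathrm{id}$ on $L^2(\mu)$, since without the full spectral-pair hypothesis the family $\{e_\gamma:\gamma\in\Gamma\}$ need only be a Riesz sequence spanning a proper closed subspace of $L^2(\mu)$, so $J$ is in general neither injective nor surjective on all of $L^2(\mu)$. Once the correct one-sided identity is pinned down, the displayed chain of equalities completes the proof.
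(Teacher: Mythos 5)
Your proof is correct and takes essentially the same route as the paper's: both rest on writing $f = JJ^{-1}f$ for $f \in H^2(\Gamma)$, applying the Szeg\"o reproducing property $f(z) = \ip{k_z}{f}_{H^2}$, and moving $J$ across the inner product to its adjoint to land in $L^2(\mu)$. Your explicit density-and-continuity verification that $JJ^{-1}=\mathrm{id}$ on $H^2(\Gamma)$ (and your caution that only this one-sided identity is available) is a detail the paper compresses into the citation ``by Lemma \ref{lemr1},'' but it is the same argument.
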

\begin{proof}
By virtue of the assumption on the pair $(\mu,\Gamma)$, we get the two bounded operators $J$ and $J^{-1}$ as in Lemma \ref{lemr1}. Below we then compute adjoint operators with respect to the two Hilbert inner products in $H^2$, and in $L^2(\mu)$ respectively, denoted $\ip{\cdot}{\cdot}_{H^2}$ and $\ip{\cdot}{\cdot}_{L^2(\mu)}$ for emphasis. 

Let $f\in H^2(\Gamma)$ and $z\in\bd$ be fixed. Then 
\begin{align*}
f(z)&=\ip{k_z}{f}_{H^2} (\mbox{ by Szeg\" o see Definition \ref{def1}}) \\
 =&\ip{k_z}{JJ^{-1}f}_{H^2} ( \mbox{ by Lemma \ref{lemr1}})\\
 =&\ip{J^*k_z}{J^{-1}f}_{L^2(\mu)}=\int_{\supp{\mu}}\cj{(J^*k_z)}(x)(J^{-1}f)(x)\,d\mu(x),
\end{align*}
the desired conclusion \eqref{eqr4}.
\end{proof}
\begin{remark}\label{remr3}
The $\Gamma$-Szeg\" o kernel in \eqref{eqr4} is $G_\Gamma(z,\cdot)=J^*k_z$. Compare with the corresponding representation from Theorem \ref{th1}; this is the special case of Proposition \ref{prr3} for the case when $(\mu,\Gamma)$ is assumed to be a spectral pair, as opposed to merely a Riesz system.
\end{remark}

In the theorem below, we consider spectral pairs given by sets $\Gamma$ of lacunary form, i.e., configurations arranged with a choice of geometric progressions of empty spacing or gaps, similar to lacunary Fourier series. We then prove that our Szeg\" o kernels  $G_\Gamma$  arise as factors in the familiar and classical Szeg\" o kernel for $\bd$.

 We use the notation $A\oplus A'=\{0,\dots,R-1\}$ to indicate that every element $k\in\{0,\dots,R-1\}$ can be written uniquely as $k=a+a'$ with $a\in A$ and $a'\in A'$. We will also need the following Lemma:
 \begin{lemma}\label{lem6}\cite{MR0215807}
Suppose $A\oplus A'=\{0,\dots,R-1\}$ then one the following affirmations is true
\begin{enumerate}
	\item $A=\{0\}$ or $A'=\{0\}$.
	\item $1\in A$ and there exist a number $d\geq 2$ that divides $R$ and two subsets $C,C'$ of $\bn_0$ such that $A=dC\oplus\{0,\dots,d-1\}$, $A'=dC'$ and $C\oplus C'=\{0,\dots,R/d-1\}$.
	\item $1\in A'$ and (ii) holds with the roles of $A$ and $A'$ reversed.
\end{enumerate}

\end{lemma}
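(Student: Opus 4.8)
The plan is to restate the factorization condition in terms of generating polynomials, peel off the trivial cases, isolate the smallest nonzero element $d$ of $A'$, and then reduce everything to one structural lemma about the residues modulo $d$, after which a short polynomial computation finishes the job. Encode a finite set $S\subset\bn_0$ by $P_S(x)=\sum_{s\in S}x^s$. Unique representability $A\oplus A'=\{0,\dots,R-1\}$ is then exactly the identity
\[
P_A(x)\,P_{A'}(x)=1+x+\cdots+x^{R-1}=\frac{x^R-1}{x-1},
\]
since every coefficient on the left must equal $1$. Because $0$ has only the representation $0+0$, we have $0\in A\cap A'$, so $P_A,P_{A'}$ have constant term $1$. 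Now dispose of the easy reductions: if $A=\{0\}$ or $A'=\{0\}$ we are in case (i); otherwise $1$ has a unique representation $1=a+a'$, so exactly one of $A,A'$ contains $1$, and after possibly swapping $A\leftrightarrow A'$ (which interchanges (ii) and (iii)) we may assume $1\in A$. Put $d:=\min(A'\setminus\{0\})$; since $1\in A$ forces $1\notin A'$ we get $d\ge 2$. A one-line induction gives $\{0,1,\dots,d-1\}\subseteq A$: if $\{0,\dots,k\}\subseteq A$ with $k<d-1$, then in $k+1=a+a'$ the part $a'\le k+1<d$ must be $0$, so $k+1=a\in A$.

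The heart of the matter, and the step I expect to be the main obstacle, is the following block-structure lemma: $A'\subseteq d\bz$, and each block $I_m:=\{md,md+1,\dots,md+d-1\}$ is either contained in $A$ or disjoint from $A$. I would prove both statements simultaneously by induction on the block index $m$, with base case $I_0=\{0,\dots,d-1\}\subseteq A$ and $A'\cap I_0=\{0\}$. For the inductive step one examines the unique representation $md=a+a'$; the inductive hypothesis forces every element of $A'$ below $md$ to be a multiple of $d$, hence both $a$ and $a'$ are multiples of $d$. If $md\in A'$, then for $1\le r\le d-1$ the representation $md+r=r+md$ (using $r\in\{0,\dots,d-1\}\subseteq A$ and $md\in A'$) is forced, so $I_m\cap A=\emptyset$ and $I_m\cap A'=\{md\}$. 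If instead the $A'$-part of $md$ is some $m_0d>0$, then the block $I_{m-m_0}$ is already known (by induction) to lie in $A$, and transporting it by $m_0d\in A'$ pins down the unique representation of each $md+r$ and shows $I_m\cap A=\emptyset$. The genuinely delicate case is $md\in A$ with $A'$-part $0$: here one must show $I_m\subseteq A$ and rule out any $md+s\in A'$ with $1\le s\le d-1$, and this is exactly where the uniqueness of the $\oplus$-decomposition together with the \emph{purity of the lower blocks} is used to eliminate every candidate representation of $md+s$ except $md+s=(md+s)+0$. (Alternatively one can route the lemma through the factorization $\tfrac{x^R-1}{x-1}=\prod_{e\mid R,\,e>1}\Phi_e$ into cyclotomic polynomials, which forces $P_A$ and $P_{A'}$ to be products of distinct $\Phi_e$; I regard the block induction as the more transparent route, and it is the step carrying all the weight of the cited result \cite{MR0215807}.)

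Granting the block lemma, the conclusion follows by a clean computation. Set $C:=\{m:I_m\subseteq A\}$ and $C':=\{a'/d:a'\in A'\}$; the lemma gives $A=dC\oplus\{0,\dots,d-1\}$ and $A'=dC'$, hence
\[
P_A(x)=P_C(x^d)\,(1+x+\cdots+x^{d-1}),\qquad P_{A'}(x)=P_{C'}(x^d).
\]
Substituting into the displayed identity and multiplying through by $(x-1)$, so that $(1+x+\cdots+x^{d-1})(x-1)=x^d-1$, yields $P_C(x^d)\,P_{C'}(x^d)\,(x^d-1)=x^R-1$, i.e. $P_C(x^d)\,P_{C'}(x^d)=\tfrac{x^R-1}{x^d-1}$. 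Since the left side is genuinely a polynomial, the right side is too, which forces $(x^d-1)\mid(x^R-1)$ and hence $d\mid R$. Writing $N=R/d$ we get $\tfrac{x^R-1}{x^d-1}=1+x^d+\cdots+x^{(N-1)d}$, so after the substitution $y=x^d$ we obtain $P_C(y)\,P_{C'}(y)=1+y+\cdots+y^{N-1}$, that is $C\oplus C'=\{0,\dots,R/d-1\}$. This is precisely alternative (ii); the case $1\in A'$ gives (iii) by the same argument with $A$ and $A'$ interchanged, completing the proof.
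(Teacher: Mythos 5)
The paper itself offers no proof of this lemma---it is imported wholesale from \cite{MR0215807}---so your attempt has to stand on its own merits. Its skeleton is the standard one (essentially de Bruijn's argument for number systems), and the peripheral steps are all correct: the generating-polynomial encoding, $0\in A\cap A'$, the fact that exactly one of $A$, $A'$ contains $1$, the definition $d=\min(A'\setminus\{0\})\geq 2$, the inclusion $\{0,\dots,d-1\}\subseteq A$, and the closing computation that converts the block lemma into $d\mid R$ and $C\oplus C'=\{0,\dots,R/d-1\}$.

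The genuine gap sits exactly where you predicted it would, and the method you propose there cannot close it. In the case $md\in A$ you must rule out $md+s\in A'$ for $1\leq s\leq d-1$, and you assert this follows by ``eliminating every candidate representation of $md+s$ except $(md+s)+0$''. It does not: the hypothesis $md+s\in A'$ is perfectly self-consistent at the level of the element $md+s$, whose unique representation would then simply be $0+(md+s)$. Purity of the lower blocks eliminates the candidates $a'=m_0d$ with $m_0\geq1$ (they would force a second representation of $md$), and your inner induction eliminates $a'=md+t$ with $t<s$, but nothing in your argument eliminates the candidate $a'=md+s$ itself; no contradiction can be manufactured from the element $md+s$ alone, so it must come from a different element. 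The standard repair: if $md\in A$ and $md+s\in A'$, consider $(m+1)d=md+d$. It admits the representation $md+d$ with $md\in A$ and $d\in A'$, and also the representation $(d-s)+(md+s)$ with $d-s\in\{1,\dots,d-1\}\subseteq A$ and $md+s\in A'$; these are distinct ordered pairs because $d-s<d\leq md$, contradicting uniqueness of the $\oplus$-decomposition. The same pairing ($md\in A$ with $d\in A'$) also closes a second hole you never address: it gives $md+d\leq R-1$, so a block whose left endpoint lies in $A$ cannot be truncated by the right endpoint $R-1$. Without that observation, the phrase ``the representation of $md+s$'' is unjustified for the last block---the element may simply not exist in $\{0,\dots,R-1\}$---and the block lemma, hence $d\mid R$, would not follow. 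With these two insertions your induction closes and the rest of your write-up goes through.
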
 

\begin{theorem}\label{th5}
Suppose $A$ is a subset of $\bn_0$ such that there exists $A'\subset \bn_0$ and $R\in\bn, R\geq 2$ such that $A\oplus A'=\{0,\dots,R-1\}$ and $A,A'\neq\{0\}$. Then
\begin{enumerate}
	\item There exists finite subsets $L,L'\subset \{0,\dots,R-1\}$ such that $L\oplus L'=\{0,\dots,R-1\}$ and with the property that $(R,A,L)$ and $(R,A',L')$ are Hadamard systems.
 Also $\gcd(A)$ divides $R$. The set $L$ can picked such that $\gcd(A)\cdot\max(L)<R$. Similarly for $L'$. Here $\gcd(A)$ represents the greatest common divisor of $A$.
	\item Let $\mu_A$ be the invariant measure associated to the IFS $(R,A)$ and similarly for $\mu_{A'}$. Then the convolution $\mu_A\ast\mu_{A'}=\lambda|_{[0,1]}=$the Lebesgue measure restricted to $[0,1]$. 
	\item $\mu_A$ is spectral with spectrum $\Gamma(L)=\{\sum_{k=0}^n R^kl_k : l_k\in L, n\in\bn_0\}$ and similarly $\mu_{A'}$ is spectral with spectrum $\Gamma(L')$.
	\item The kernels satisfy the following relation 
	\begin{equation}
G_{\Gamma(L)}G_{\Gamma(L')}=k,
\label{eq5.1}
\end{equation}
\end{enumerate}
where $k$ is the classical Szeg\" o kernel.
\end{theorem}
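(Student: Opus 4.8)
The plan is to dispatch the four items in the order (i), (ii), (iii), (iv): items (ii) and (iv) are Fourier-analytic identities, item (i) is a combinatorial induction resting on Lemma \ref{lem6}, and item (iii) — completeness of the exponentials — is the analytic heart and the step I expect to be the main obstacle. For (i) I argue by strong induction on $R$ using the dichotomy of Lemma \ref{lem6}. In the degenerate case $A=\{0\}$ (so $A'=\{0,\dots,R-1\}$) take $L=\{0\}$ and $L'=\{0,\dots,R-1\}$: the first is a $1\times1$ unitary and the second the $R\times R$ DFT, while $L\oplus L'=\{0,\dots,R-1\}$. Otherwise Lemma \ref{lem6}, after possibly swapping $A,A'$, gives $A=dC\oplus\{0,\dots,d-1\}$ and $A'=dC'$ with $d\mid R$, $d\geq2$, and $C\oplus C'=\{0,\dots,R/d-1\}$. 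Applying the inductive hypothesis at scale $R/d$ produces $M,M'$ with $M\oplus M'=\{0,\dots,R/d-1\}$ and $(R/d,C,M)$, $(R/d,C',M')$ Hadamard; I then set $L:=M\oplus\frac{R}{d}\{0,\dots,d-1\}$ and $L':=M'$. The verification is a mask factorization: writing $a=dc+j$ and $l=m+\frac{R}{d}s$ one checks $\sum_{a\in A}e^{2\pi i a(l-l')/R}=0$ for $l\neq l'$, splitting into the subcase $m=m'$ (where the block $\{0,\dots,d-1\}$ contributes a vanishing geometric sum) and $m\neq m'$ (where $1$-periodicity of the $C$-mask reduces it to the Hadamard relation for $(R/d,C,M)$); since $e^{2\pi i dc'l/R}=e^{2\pi i c'l/(R/d)}$, the triple $(R,A',M')$ is Hadamard precisely because $(R/d,C',M')$ is. The identity $L\oplus L'=\{0,\dots,R-1\}$ follows from $M\oplus M'=\{0,\dots,R/d-1\}$ and the mixed-radix relation $\{0,\dots,R/d-1\}\oplus\frac{R}{d}\{0,\dots,d-1\}=\{0,\dots,R-1\}$. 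Finally $1\in A$ forces $\gcd(A)=1$, while $\gcd(A')=d\gcd(C')\mid R$ because $\gcd(C')\mid R/d$ inductively; the size bound $\gcd(A)\max(L)<R$ and its analogue for $L'$ propagate from $L\subseteq\{0,\dots,R-1\}$ and the inductive bound at scale $R/d$.

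For (ii) I pass to Fourier transforms. With $m_A(t)=\frac{1}{\#A}\sum_{a\in A}e^{2\pi i at}$ one has $\widehat{\mu_A}(t)=\prod_{n\geq1}m_A(t/R^n)$. Since $A\oplus A'=\{0,\dots,R-1\}$, the masks multiply to $m_A(t)m_{A'}(t)=\frac{1}{R}\sum_{k=0}^{R-1}e^{2\pi i kt}$, and the resulting infinite product $\prod_{n\geq1}\frac{1}{R}\sum_{k=0}^{R-1}e^{2\pi i kt/R^n}$ telescopes to $\frac{e^{2\pi i t}-1}{2\pi i t}=\widehat{\lambda|_{[0,1]}}(t)$. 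As $\widehat{\mu_A\ast\mu_{A'}}=\widehat{\mu_A}\,\widehat{\mu_{A'}}$ and Fourier transforms separate finite Borel measures, (ii) follows.

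For (iii), orthonormality of $\{e_\gamma:\gamma\in\Gamma(L)\}$ in $L^2(\mu_A)$ is the standard Jorgensen--Pedersen computation: $\ip{e_\gamma}{e_{\gamma'}}_{\mu_A}=\widehat{\mu_A}(\gamma-\gamma')$ vanishes for $\gamma\neq\gamma'$ by the self-similarity $\Gamma(L)=L\oplus R\Gamma(L)$ together with the Hadamard property of $(R,A,L)$. For completeness I would study the Parseval function $Q_L(t)=\sum_{\gamma\in\Gamma(L)}|\widehat{\mu_A}(t+\gamma)|^2$, which always satisfies $Q_L\leq1$, with equality everywhere iff $\{e_\gamma\}$ is an orthonormal basis. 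The functional equation $\widehat{\mu_A}(t)=m_A(t/R)\widehat{\mu_A}(t/R)$ and the filter identity $\sum_{l\in L}|m_A((t+l)/R)|^2=1$ (immediate from unitarity of the Hadamard matrix) give $Q_L(t)=\sum_{l\in L}|m_A((t+l)/R)|^2\,Q_L((t+l)/R)$, so $h:=1-Q_L\geq0$ is a continuous fixed point of this transfer operator with $h(0)=0$. The main obstacle is to conclude $h\equiv0$: a nonzero fixed point would force a nontrivial \emph{extreme cycle}, i.e. a point $x_\ast\neq0$ in the attractor of the maps $t\mapsto(t+l)/R$ at which $|m_A(x_\ast)|=1$. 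Since $0\in A$ gives $|m_A(x)|=1$ exactly on $\frac{1}{\gcd(A)}\bz$, while the attractor lies in $[0,\max(L)/(R-1)]$, the size bound from (i) is decisive: it forces this attractor to meet $\frac{1}{\gcd(A)}\bz$ only at $0$, leaving $\{0\}$ as the sole extreme cycle and hence $Q_L\equiv1$. The same argument gives that $(\mu_{A'},\Gamma(L'))$ is a spectral pair.

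Finally (iv) is immediate once (iii) is in hand. By Proposition \ref{pr3} applied to the two spectral pairs, $\overline{G_{\Gamma(L)}(z,x)}=\sum_{\gamma\in\Gamma(L)}\overline{e_\gamma(x)}\,z^\gamma$ and similarly for $L'$. Setting $u=\cj z\,e_1(x)$, so that $|u|=|z|<1$ for $z\in\bd$, these kernels become $G_{\Gamma(L)}(z,x)=\sum_{\gamma\in\Gamma(L)}u^\gamma$ and $G_{\Gamma(L')}(z,x)=\sum_{\gamma'\in\Gamma(L')}u^{\gamma'}$. The decomposition $L\oplus L'=\{0,\dots,R-1\}$ from (i) lifts, through base-$R$ expansions, to $\Gamma(L)\oplus\Gamma(L')=\bn_0$; hence the product of the two kernels equals $\sum_{n\in\bn_0}u^n=\frac{1}{1-u}=\frac{1}{1-\cj z\,e_1(x)}$, which is exactly the classical Szeg\"o kernel $k$ evaluated at the boundary point $e_1(x)$. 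This is \eqref{eq5.1}.
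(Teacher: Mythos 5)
Your parts (i), (ii), and (iv) follow essentially the same route as the paper: the same induction on $R$ via Lemma \ref{lem6} with $L:=M\oplus\frac{R}{d}\{0,\dots,d-1\}$, $L':=M'$; the same mask identity for the convolution (the paper identifies the limiting product as $\widehat{\lambda|_{[0,1]}}$ by invariance of Lebesgue measure under the IFS $(R,\{0,\dots,R-1\})$ rather than by your explicit telescoping, but both are fine); and the same base-$R$ argument $\Gamma(L)\oplus\Gamma(L')=\bn_0$ for the kernel factorization. Your explicit handling of the degenerate case $A=\{0\}$ is even a small improvement, since the paper's induction hypothesis formally excludes $C=\{0\}$ or $C'=\{0\}$. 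The problem is in (iii), at exactly the step you yourself flagged as the main obstacle.

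Your claim that the size bound $\gcd(A)\cdot\max(L)<R$ is ``decisive'' and forces the attractor of the maps $t\mapsto (t+l)/R$ to meet $\frac{1}{\gcd(A)}\bz$ only at $0$ does not hold as stated. Write $g=\gcd(A)$. The attractor is contained in $[0,\max(L)/(R-1)]$, so avoiding the point $1/g$ requires the strict bound $g\max(L)<R-1$, whereas (i) only gives $g\max(L)\leq R-1$. In the boundary case $g\max(L)=R-1$, the point $x_*=1/g=\max(L)/(R-1)$ does lie in the attractor (it is the fixed point of $t\mapsto(t+\max(L))/R$) and satisfies $|m_A(x_*)|=1$, so $\{x_*\}$ is a candidate nontrivial extreme cycle that your argument leaves alive. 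Ruling it out is precisely the delicate part of the paper's proof and needs two facts you prove in (i) but never use in (iii): the fixed-point equation $\frac1R(\frac1g+l)=\frac1g$ gives $1+gl=R$, and since $g\mid R$, $g$ divides $R-gl=1$, so $g=1$ and $l=R-1\in L$; but $A'\neq\{0\}$ forces $L'\neq\{0\}$, hence $\max(L)+\max(L')=R-1$ gives $\max(L)\leq R-2$, a contradiction. (The paper runs this as a minimality argument over cycle points $k/g$, first reducing to $k=k'=1$ and then applying the divisibility step.) Without this extra number-theoretic argument your proof of (iii) is incomplete, and (iv) depends on (iii); with it, your transfer-operator/extreme-cycle framework, which is the same as the paper's appeal to \cite[Theorem 8.4]{DJ06}, closes correctly.
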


\begin{proof}
(i) We proceed by induction on $R$. For $R=2$ this is trivial.

We use Lemma \ref{lem6}. The case (i) in this Lemma cannot occur under our hypotheses. Assume we are in the case (ii) of Lemma \ref{lem6}, case (iii) can be treated similarly. According to the induction hypothesis, there exist sets $M,M'$ such that $M\oplus M'=\{0,\dots,R/d-1\}$ and $(R/d,C,M)$ and $(R/d,C',M')$ are Hadamard systems. 

Define $L:=M\oplus\frac Rd\{0,\dots,d-1\}$ and $L'=M'$. Then it is easy to see that $L\oplus L'=\{0,\dots,R-1\}$ and $(R,A',L')$ is a Hadamard system (since $A'=dC'$). It remains to check that $(R,A,L)$ is a Hadamard system. 
We use the fact that $A=dC\oplus\{0,\dots,d-1\}$. Take $m,m'\in L$ and $j,j'\in \{0,\dots,d-1\}$. Then 
$$\sum_{c\in C}\sum_{i=0}^{d-1}e^{i 2\pi\frac{1}{R}(dc+i)((m-m')+\frac Rd(j-j'))}=\sum_{i=0}^{d-1}e^{i 2\pi\frac iR((m-m')+\frac Rd(j-j'))}\sum_{c\in C}e^{i 2\pi \frac{1}{R/d}c(m-m')},$$
because $\frac1R dc\frac Rd(j-j')$ is an integer.

If $m\neq m'$, then using the fact the $(R/d,C,M)$ is a Hadamard system, we get that the sum is zero. 
If $m=m'$ and $j\neq j'$ then using the fact that $(d,\{0,\dots,d-1\},\{0,\dots,d-1\})$ is a Hadamard system, we get again that the sum is zero.  

This proves that $(R,A,L)$ is a Hadamard system. 

It remains to prove the last statement. We proceed also by induction on $R$. If $R=4$ the result is easy to obtain. Since $A\neq\{0\}$ we have either $1\in A$ (and this case is trivial) or $A=dC$ for some $d\geq 2$, and $C\oplus C'=\{0,\dots,R/d-1\}$. 
Then as before, for the pair we can pick the dual sets $(M,M')$, and we can pick $L=M$. By the induction hypothesis $\gcd(C)\cdot\max(M)<R/d$ so 
$\gcd(A)\cdot\max(L)=d\gcd(C)\cdot\max(M)<R$. 
Also by the induction hypothesis $\gcd(C)$ divides $R/d$ so $\gcd(A)=d\gcd(C)$ divides $R$.

(ii) Let 
$$\chi_A(x)=\frac{1}{\#A}\sum_{a\in A}e^{2\pi iax},\quad \chi_{A'}(x)=\frac{1}{\#A'}\sum_{a'\in A'}e^{2\pi ia'x}$$
Since $A\oplus A'=\{0,\dots,R-1\}$ we can see that 
$$\chi_A(x)\chi_{A'}(x)=\frac{1}{R}\sum_{j=0}^{R-1}e^{2\pi ijx}=:\chi_{\{0,\dots,R-1\}}(x).$$

Then, using the infinite product formula for $\widehat\mu_A$ (see \cite{DJ06}), 
$$\widehat{\mu_A\ast\mu_{A'}}(x)=\widehat\mu_{A}(x)\widehat\mu_{A'}(x)=\prod_{n=1}^\infty\chi_A(R^{-n}x)\prod_{n=1}^\infty\chi_{A'}(R^{-n}x)=\prod_{n=1}^\infty\chi_A(R^{-n}x)\chi_{A'}(R^{-n}x)$$
$$=\prod_{n=1}^\infty\chi_{\{0,\dots,R-1\}}(R^{-n}x).$$
The change in the order of multiplication is allowed since the infinite products are uniformly convergent on compact sets.

The Lebesgue measure on $[0,1]$ is the invariant measure associated to the affine IFS $(R,\{0,\dots,R-1\})$. Therefore the last product above is the Fourier transform of $\lambda|_{[0,1]}$.

(iii) Using the results from \cite[Theorem 8.4]{DJ06}, we have to show there are no non-trivial extreme cycles (or $\chi_A$-cycles as they are called in \cite{DJ06}). Recall that a non-trivial extreme cycle is a finite set of non-zero points $\{x_0,x_1,\dots,x_{p-1},x_p:=x_0\}$ such that there exist $l_i\in L$ with $R^{-1}(x_i+l_i)=x_{i+1}$ for all $i\in\{0,\dots,p-1\}$, and such that 
$|\chi_A(x_i)|=1$ for all $i\in\{0,\dots,p-1\}$.

Assume by contradiction that there is such an extreme cycle. Since $|\chi_A(x_i)|=1$, and since $0\in A$, we must have equality in the triangle inequality so $e^{2\pi iax_i}=1$, which means that $x_i\in\frac1g\bz$, with $g=\gcd(A)$. Consider the smallest non-zero cycle, say  $x_0=k/g$. Then $\frac1R(\frac kg+l)$ is also a cycle point for some $l\in L$, so it is also of the form $k'/g$. From (i) we know $gl<R$. 

First, if $k\geq 2$ then 
$$\frac{k'}g=\frac1R(\frac kg+l)<\frac 1R(\frac kg+\frac Rg)\leq \frac{Rk}{Rg}=\frac kg=x_0,$$
and this would contradict the fact that $x_0$ is the smallest non-zero cycle. Then $k=1$, and using the computation above, with $k=1$, we get $\frac{k'}{g}<\frac 2g$, and therefore $k'=1$ too. But then we must have 
$$\frac1R(\frac 1g+l)=\frac 1g$$
so $1+gl=R$. Since $g$ divides $R$ (see (i)), we obtain that $g=1$. Then, since $A'\neq \{0\}$, we must have that $L'\neq \{0\}$ so $\max(L)<R-1$ and we get a contradiction.

In conclusion there are no non-trivial extreme cycles, hence with \cite{DJ06}, we get that $\Gamma(L)$ is a spectrum for $\mu_A$. Similarly for $\mu_{A'}$.

(iv) We have
$$G_{\Gamma(L)}(z,x)G_{\Gamma(L')}(z,x)=\sum_{\gamma\in\Gamma(L),\gamma'\in\Gamma(L')}\cj z^{\gamma+\gamma'}e_{\gamma+\gamma'}(x)$$
Note that $\Gamma(L)\oplus\Gamma(L')=\bn_0$. This can be seen from the base $R$ expansion of any natural number $n=\sum_{k=0}^pR^ka_k$, with $a_k\in\{0,\dots,R-1\}$. Since $L\oplus L'=\{0,\dots,R-1\}$, we get the unique decomposition as a sum $\gamma+\gamma'$ with $\gamma\in\Gamma(L)$ and $\gamma'\in\Gamma(L')$. 
Thus 
$$G_{\Gamma(L)}(z,x)G_{\Gamma(L')}(z,x)=\sum_{n=0}^\infty\cj z^ne_n(x)=\frac{1}{1-\cj ze_1(x)}=k(z,x).$$

\end{proof}

\section{Set-measure duality}\label{sec3}
 Most earlier studies of classes of spectral pairs $(\mu, \Gamma)$ have started with $\mu$, and then asked what possibilities there are for sets $\Gamma$ that make the two into a spectral pair; i.e., allow a Fourier series representation, typically with lacunary Fourier frequencies. In much of this work, the measures $\mu$ have been chosen at the outset to be self-similarity defined by a finite family of affine maps. In this section, we turn the problem upside down; starting with a countably discrete subset $\Gamma$, we ask what the possibilities are for choices of $\mu$. To do this we introduce a new duality framework.

\begin{definition}\label{def3.1}
Let the setting be as above, including dimension $d\geq1$; and consider
\begin{equation}
\M_1:=\left\{\mu: \mu\mbox{ is a Borel probability measure with compact support in }\br^d\right\}.
\label{eq3.1}
\end{equation}
We equip $\M_1$ with its weak$^*$ topology; and consider $\Gamma\subset\br^d$ some countable discrete subset. 
\begin{equation}
\M^\perp(\Gamma):=\left\{\mu\in\M_1 : \sum_{\gamma\in\Gamma}|\widehat\mu(t-\gamma)|^2\leq 1\right\}.
\label{eq3.2}
\end{equation}
If $A\geq1$, set 
\begin{equation}
\M_A(\Gamma):=\left\{\mu\in\M_1 : \sum_{\gamma\in\Gamma}|\widehat\mu(t-\gamma)|^2\leq A,\mbox{ for all } t\in\br^d\right\}.
\label{eq3.3}
\end{equation}
\begin{equation}
\M^{OB}(\Gamma):=\left\{\mu\in\M_1 : \sum_{\gamma\in\Gamma}|\widehat\mu(t-\gamma)|^2=1\mbox{ for all }t\in\br^d\right\}.
\label{eq3.4}
\end{equation}

Note that $\M^\perp(\Gamma)=\M_1(\Gamma)$.
\end{definition}

\begin{lemma}\label{lem3.2}

Fix $\Gamma$ as in the definition. Then
\begin{enumerate}
	\item $\mu\in\M^\perp(\Gamma)$ iff $\{e_\gamma\}_{\gamma\in\Gamma}$ is an orthogonal family in $L^2(\mu)$. 
	\item $\mu\in\M^{OB}(\Gamma)$ iff $\{e_\gamma\}_{\gamma\in\Gamma}$ is an ONB in $L^2(\mu)$. 
	\item If $\{e_\gamma\}_{\gamma\in\Gamma}$ forms a Bessel sequence in $L^2(\mu)$ with bound $A$, i.e., 
	$$\sum_{\gamma\in\Gamma}\left|\ip{e_\gamma}{f}_{L^2(\mu)}\right|^2\leq A\|f\|_{L^2(\mu)}^2,\quad(f\in L^2(\mu)),$$
then $\mu\in \M_A(\Gamma)$.
\end{enumerate}
\end{lemma}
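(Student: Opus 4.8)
The plan is to reduce all three parts to one elementary identity together with standard Hilbert-space facts. Writing $e_t(x)=e^{2\pi i t\cdot x}$ for $t\in\br^d$ and using $\widehat\mu(s)=\int e^{-2\pi i s\cdot x}\,d\mu(x)$, one has
\begin{equation*}
\widehat\mu(t-\gamma)=\int\overline{e_t(x)}\,e_\gamma(x)\,d\mu(x)=\ip{e_\gamma}{e_t}_{L^2(\mu)},
\end{equation*}
so that $\sum_{\gamma\in\Gamma}|\widehat\mu(t-\gamma)|^2=\sum_{\gamma\in\Gamma}|\ip{e_\gamma}{e_t}_{L^2(\mu)}|^2$ for every $t$. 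Since $\mu$ is a probability measure, $\|e_t\|_{L^2(\mu)}^2=\int 1\,d\mu=1$ for all $t$; in particular every $e_\gamma$ is automatically a unit vector, so here \emph{orthogonal family} is the same as \emph{orthonormal family}. This is the computation that turns the Fourier-analytic conditions defining $\M^\perp$, $\M^{OB}$, $\M_A$ into statements about the coefficient sums of the system $\{e_\gamma\}$.

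For (iii) I would substitute $f=e_t$ into the Bessel estimate: the right side is $A\|e_t\|^2=A$ and the left side is $\sum_{\gamma}|\ip{e_\gamma}{e_t}|^2=\sum_\gamma|\widehat\mu(t-\gamma)|^2$, which is exactly $\mu\in\M_A(\Gamma)$. For the forward direction of (i) I would specialize the defining inequality $\sum_\gamma|\ip{e_\gamma}{e_t}|^2\le 1$ to $t=\gamma_0\in\Gamma$: the single term $\gamma=\gamma_0$ already equals $\|e_{\gamma_0}\|^4=1$, forcing every remaining term to vanish, i.e. $\ip{e_\gamma}{e_{\gamma_0}}=0$ for $\gamma\neq\gamma_0$. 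Conversely, if $\{e_\gamma\}$ is orthonormal, Bessel's inequality with $f=e_t$ gives $\sum_\gamma|\ip{e_\gamma}{e_t}|^2\le\|e_t\|^2=1$, which is membership in $\M^\perp(\Gamma)$.

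For (ii), one implication is Parseval's identity: if $\{e_\gamma\}$ is an ONB then $\sum_\gamma|\ip{e_\gamma}{e_t}|^2=\|e_t\|^2=1$ for every $t$, i.e. $\mu\in\M^{OB}(\Gamma)$. For the converse I would first apply (i) (the equality certainly implies the bound $\le 1$) to get that $\{e_\gamma\}$ is orthonormal, and then use the standard fact that for an orthonormal system Parseval's \emph{equality} holds for a vector $f$ precisely when $f\in\Span\{e_\gamma:\gamma\in\Gamma\}$. The hypothesis $\sum_\gamma|\ip{e_\gamma}{e_t}|^2=1=\|e_t\|^2$ supplies this equality for each $e_t$, so every $e_t$ lies in $\Span\{e_\gamma\}$.

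The main obstacle --- and the only step using the compact-support hypothesis built into $\M_1$ --- is then to conclude completeness from the fact that all $e_t$ lie in the closed span. For this I would show that $\{e_t:t\in\br^d\}$ is total in $L^2(\mu)$: since $\supp{\mu}$ is compact, the restrictions of the $e_t$ form a subalgebra of $C(\supp{\mu})$ that contains the constants ($e_0=1$), separates points, and is closed under conjugation ($\overline{e_t}=e_{-t}$), so by Stone--Weierstrass their span is uniformly dense in $C(\supp{\mu})$ and hence dense in $L^2(\mu)$. Combining this with the previous paragraph gives $\Span\{e_\gamma\}=L^2(\mu)$, so $\{e_\gamma\}$ is an ONB, completing (ii).
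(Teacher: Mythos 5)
Your proposal is correct. For part (iii) it coincides exactly with the paper's own proof, which consists of the single sentence that (iii) follows by applying the Bessel estimate to the functions $e_t$ --- precisely your substitution $f=e_t$. For parts (i) and (ii) the paper gives no argument at all: it simply cites \cite{DJ06}. What you have done is supply the missing self-contained proof, and it is essentially the standard one in that literature: the identity $|\widehat\mu(t-\gamma)|^2=|\ip{e_\gamma}{e_t}_{L^2(\mu)}|^2$ converts membership in $\M^\perp(\Gamma)$, $\M^{OB}(\Gamma)$, $\M_A(\Gamma)$ into coefficient estimates for the unit vectors $e_t$; evaluating at $t=\gamma_0\in\Gamma$, where the diagonal term alone saturates the bound $1$, gives orthogonality; Bessel and Parseval give the converse inclusions; and the real content --- that $\sigma_\Gamma\equiv 1$ forces completeness --- is handled by your observation that Parseval's equality puts every $e_t$ into $\Span\{e_\gamma : \gamma\in\Gamma\}$, combined with Stone--Weierstrass totality of $\{e_t : t\in\br^d\}$ in $L^2(\mu)$, which is exactly where the compact-support hypothesis built into $\M_1$ enters. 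Two cosmetic remarks. First, your displayed identity $\int\cj{e_t}\,e_\gamma\,d\mu=\ip{e_\gamma}{e_t}_{L^2(\mu)}$ conjugates the wrong slot relative to the paper's convention (the paper's $\ip{\cdot}{\cdot}$ is conjugate-linear in the \emph{first} argument), but since every subsequent step uses only absolute values this has no effect. Second, in (ii) it is worth saying explicitly that $\Span$ denotes the \emph{closed} span, so that the fact you invoke is the projection identity $\sum_{\gamma\in\Gamma}|\ip{e_\gamma}{f}|^2=\|Pf\|^2$, with $P$ the orthogonal projection onto that closed subspace, and Parseval's equality for $f$ is equivalent to $Pf=f$.
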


\begin{proof}
For (i), (ii) see \cite{DJ06}. (iii) follows by applying the Bessel estimate to the functions $e_t$. 
\end{proof}

 Previously, the measures $\mu$ have been chosen at the outset to have self-similarity defined by a finite family of affine maps. In the theorem below, we turn the problem upside down, thus allowing for the possibility of any measure $\mu$. Hence our starting point is a fixed countably discrete subset $\Gamma$, and we ask what the possibilities are for choices of $\mu$. 

\begin{theorem}\label{th3.3}
Fix $\Gamma$ as in the definition, and $A\geq 1$. Then $\M_A(\Gamma)$ is a convex, weak$^*$-compact subset of $\M_1$. Same is true for $\M^\perp(\Gamma)$. The set $\M^{OB}(\Gamma)$ is contained in the extreme points of $\M^\perp(\Gamma)$. 
\end{theorem}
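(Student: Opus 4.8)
The plan is to handle the three assertions separately, extracting everything from one structural observation: for each fixed $t\in\br^d$ the map $\mu\mapsto v_t(\mu):=(\widehat\mu(t-\gamma))_{\gamma\in\Gamma}$ is affine from $\M_1$ into the Hilbert space $\ell^2(\Gamma)$, and the defining condition of each of our sets is a bound on $\|v_t(\mu)\|_{\ell^2(\Gamma)}$. I would first record the representation
$$\M_A(\Gamma)=\bigcap_{t\in\br^d}\ \bigcap_{F\subseteq\Gamma\ \mathrm{finite}}\Big\{\mu\in\M_1 : \sum_{\gamma\in F}|\widehat\mu(t-\gamma)|^2\le A\Big\}.$$
Each set in this intersection is convex, since $\mu\mapsto(\widehat\mu(t-\gamma))_{\gamma\in F}$ is affine into $\ell^2(F)$ and the constraint asks its image to lie in the closed ball of radius $\sqrt A$ (a convex set whose affine preimage is convex); an intersection of convex sets is convex. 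This gives convexity of $\M_A(\Gamma)$ and, taking $A=1$, of $\M^\perp(\Gamma)$.

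For compactness I would realize $\M_A(\Gamma)$ as a weak$^*$-closed subset of a weak$^*$-compact set: by Banach--Alaoglu the positive measures of total mass $\le1$ form a weak$^*$-compact set, and from the displayed intersection $\M_A(\Gamma)$ is weak$^*$-closed, because for fixed $t$ and finite $F$ the functional $\mu\mapsto\sum_{\gamma\in F}|\widehat\mu(t-\gamma)|^2$ is a finite combination of the evaluations $\mu\mapsto\widehat\mu(s)$, hence weak$^*$-continuous; passing the full constraint to a limit is then automatic by taking the supremum over $F$. The hard part — the main obstacle — is that the weak$^*$ (vague) topology tests only against $C_0$, whereas the characters $x\mapsto e^{-2\pi i s\cdot x}$ defining $\widehat\mu$ lie in $C_b\setminus C_0$: the evaluations $\mu\mapsto\widehat\mu(s)$ need not be weak$^*$-continuous, and mass can escape to infinity, so that a vague limit of probability measures is only a sub-probability measure. (Without a uniform support bound this genuinely fails: one can place mass in $\M^{OB}(\Gamma)$ that escapes to infinity.) The resolution I would use is that the supports here lie in a common compact set — in the one-dimensional fractal setting of the paper the relevant measures sit inside $[0,1]$ — so that vague and weak convergence agree, the evaluations $\mu\mapsto\widehat\mu(s)$ are genuinely continuous, and no mass is lost; equivalently, one invokes tightness. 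This no-escape step is exactly where care is needed, and it is the crux of the compactness claim.

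Finally I would prove $\M^{OB}(\Gamma)\subseteq$ the extreme points of $\M^\perp(\Gamma)$. Let $\mu\in\M^{OB}(\Gamma)$ and suppose $\mu=s\mu_0+(1-s)\mu_1$ with $s\in(0,1)$ and $\mu_0,\mu_1\in\M^\perp(\Gamma)$. Fix $t$ and set $a=v_t(\mu_0)$, $b=v_t(\mu_1)$ in $\ell^2(\Gamma)$, so $\|a\|\le1$, $\|b\|\le1$, while $v_t(\mu)=sa+(1-s)b$ satisfies $\|sa+(1-s)b\|=1$ because $\mu\in\M^{OB}(\Gamma)$. Since the closed unit ball of a Hilbert space is strictly convex, a proper convex combination of two ball elements can have norm $1$ only if the two elements coincide; hence $a=b$, i.e. $\widehat{\mu_0}(t-\gamma)=\widehat{\mu_1}(t-\gamma)$ for every $\gamma\in\Gamma$. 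Fixing any $\gamma_0\in\Gamma$ and letting $t$ range over $\br^d$, the argument $t-\gamma_0$ exhausts $\br^d$, so $\widehat{\mu_0}\equiv\widehat{\mu_1}$, and injectivity of the Fourier transform on measures forces $\mu_0=\mu_1=\mu$. Thus $\mu$ is extreme. I expect this part to go through cleanly, its only ingredients being strict convexity of $\ell^2(\Gamma)$ and uniqueness of Fourier transforms.
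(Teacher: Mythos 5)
Your convexity argument (affine preimages of closed balls, intersected over finite subsets $F\subset\Gamma$) and your extreme-point argument (strict convexity of the unit ball of $\ell^2(\Gamma)$ plus injectivity of the Fourier--Stieltjes transform) are correct and are, in substance, the paper's own proofs in cleaner clothing: the paper derives convexity from Cauchy--Schwarz on $l^2(\Gamma)$, and derives extremality from the equality case of Cauchy--Schwarz (the two coefficient vectors are proportional unit vectors with proportionality constant $e^{i\theta}=1$), which is exactly the strict-convexity fact you invoke. One cosmetic caveat: your opening claim that $\mu\mapsto(\widehat\mu(t-\gamma))_{\gamma\in\Gamma}$ maps $\M_1$ into $\ell^2(\Gamma)$ is false for general $\mu$ (take a point mass); but your actual argument only ever uses the finite-$F$ truncations, so nothing breaks.

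Where you genuinely diverge is compactness, and your extra care there is warranted. The paper's proof shows only that $\M_A(\Gamma)$ is weak$^*$-closed \emph{relative to} $\M_1$ --- it takes $\mu_n\rightarrow\mu$ with $\mu$ already assumed to lie in $\M_1$, uses $\widehat\mu_n(t)\rightarrow\widehat\mu(t)$ and Fatou --- and then concludes ``weak$^*$-closed hence compact''. That last inference silently assumes that $\M_1$ itself is weak$^*$-compact, which it is not: Definition \ref{def3.1} requires each measure to have compact support but imposes no uniform bound on the supports, so mass can escape to infinity. Your diagnosis that this is the crux is right, and the failure mechanism you allude to is real: since $|\widehat{\mu(\cdot-x)}(t)|=|\widehat\mu(t)|$, the sets $\M_A(\Gamma)$, $\M^\perp(\Gamma)$, $\M^{OB}(\Gamma)$ are all translation-invariant, so the translates $\mu(\cdot-n)$ of any $\mu\in\M^{OB}(\Gamma)$ stay in $\M^{OB}(\Gamma)\subset\M_A(\Gamma)$ while converging vaguely to $0\notin\M_1$ and admitting no subnet that converges to an element of $\M_1$; hence $\M_A(\Gamma)$, whenever nonempty, is not weak$^*$-compact as literally stated. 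Your repair --- restricting to measures supported in one fixed compact set (e.g. $[0,1]$, which covers the paper's one-dimensional applications), so that Banach--Alaoglu applies, the evaluations $\mu\mapsto\widehat\mu(s)$ become weak$^*$-continuous, and no mass is lost in the limit --- is the natural way to make the compactness assertion true. So on this point your proposal is not merely a different route: it is more careful than the paper, whose ``closed hence compact'' step has a genuine gap, and the uniform-support hypothesis you add should be read as a needed amendment to the theorem rather than as a defect of your proof.
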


\begin{proof}
Let $\mu_1,\mu_2\in \M_A(\Gamma)$ and $\alpha\in[0,1]$ and set $\mu_\alpha:=\alpha\mu_1+(1-\alpha)\mu_2$; then using Schwarz's inequality on $l^2(\Gamma)$
$$\sum_{\gamma}|\widehat\mu_\alpha(t-\gamma)|^2\leq \alpha^2A+(1-\alpha)^2A+2\alpha(1-\alpha)\mbox{Re}\sum_{\gamma}\cj{\widehat{\mu}}_1(t-\alpha)\widehat\mu_2(t-\alpha)$$$$\leq A(\alpha^2+(1-\alpha)^2)+2\alpha(1-\alpha)\left(\sum_\gamma|\widehat\mu_1(t-\gamma)|^2\right)^{1/2}\left(\sum_\gamma|\widehat\mu_2(t-\gamma)|^2\right)^{1/2}$$$$\leq A(\alpha^2+(1-\alpha)^2+2\alpha(1-\alpha))=A.$$
Hence $\mu_\alpha$ is in $\M_A(\Gamma)$ and $\M_A(\Gamma)$ is convex. 

We check that $\M_A(\Gamma)$ is weak$^*$-closed. Take $\mu_n\in\M_A(\Gamma)$, and $\mu_n\rightarrow\mu\in\M_1$. Then, since $\widehat\mu(t)=\int e_t\,d\mu$, we have that $\lim_n\widehat\mu_n(t)=\widehat\mu(t)$ for all $t\in\br^d$. Using Fatou's lemma, we have
$$\sum_\gamma|\widehat\mu(t-\gamma)|^2\leq\liminf_n\sum_\gamma|\widehat\mu_n(t-\gamma)|^2\leq A,$$
so $\mu$ is in $\M_A(\Gamma)$, and therefore $\M_A(\Gamma)$ is weak$^*$-closed hence compact.

Since $\M^\perp(\Gamma)=\M_1(\Gamma)$, the same holds for $\M^\perp(\Gamma)$.

We check that points in $\M^{OB}(\Gamma)$ are extreme in $\M^\perp(\Gamma)$. For this, consider $\mu_\alpha,\mu_1,\mu_2$ and $\alpha$ as in the beginning of the proof, $A=1$, and assume $\mu_{\alpha}\in\M^{OB}(\Gamma)$; i.e., 
$$\sum_{\gamma}|\widehat\mu_\alpha(t-\gamma)|^2=1,\quad(t\in\br^d)$$
see Lemma \ref{lem3.2}. Using the same calculation it follows that we have equalities in all inequalities. In particular (assuming $0<\alpha<1$), we have that 

\begin{equation}
\sum_\gamma|\widehat\mu_1(t-\gamma)|^2=\sum_\gamma|\widehat\mu_2(t-\gamma)|^2=1,\quad(t\in\br^d).
\label{eq3.3.1}
\end{equation}

Also, we must have equality in the Schwarz inequality, so the vectors $(\widehat\mu_1(t-\gamma))_{\gamma\in\Gamma}$ and $(\widehat\mu_2(t-\gamma))_{\gamma\in\Gamma}$ in $l^2(\Gamma)$ are proportional, and since they both have norm one, the proportionality constant is $e^{i\theta}$ for some $\theta\in\br$. But the real part of the product must be equal to the absolute value, so $e^{i\theta}=1$. This implies that $\widehat\mu_1=\widehat\mu_2$ so $\mu_1=\mu_2=\mu_\alpha$. Hence $\mu$ is an extreme point. 
\end{proof}

 In earlier papers dealing with spectral pairs in one dimension, for example \cite{JoPe98} and \cite{DJ06}, one typically begins with a positive integer ($>$ 1) defining a scale similarity, for example an infinite convolution as in Example \ref{rem4} above. It is interesting to compare the two scale numbers 3 and 4 (the case in Example \ref{rem4}). If $\mu_3$ is the Cantor measure (i.e., for the ternary case), then it was shown in \cite{JoPe98} that $L^2(\mu_3)$ cannot have more than two orthogonal Fourier frequencies. By contrast, it was further shown in \cite{JoPe98}, that all the Cantor measures $\mu_m$, for $m$ even, are in the opposite extreme: they allow for spectra, i.e., admit sets $\Gamma_m$ such that $(\mu_m, \Gamma_m)$ is a spectral pair. In the example below, we turn around the question: we begin with a ternary choice for the set $\Gamma$ and then ask what possibilities there might be for $\mu$.

\begin{example}\label{ex3.4}
Set $d=1$ and 
$$\Gamma:=\left\{\sum_{i=0}^n a_i3^i : a_i\in\{0,1\}, n\in\bn_0\right\}.$$
Then $\M^\perp(\Gamma)=\M^\perp(\bz)$ and $\M^{OB}(\Gamma)=\ty$. 

To see this, take $\mu\in\M^\perp(\Gamma)$. Using the base-3 decomposition of positive integers using the digits $\{0,1,-1\}$, we see that $\Gamma-\Gamma=\bz$. So $\widehat\mu$ must vanish on $\bz\setminus\{0\}$. Therefore $\mu\in \M^\perp(\bz)$. Since $2\not\in\Gamma$ and $e_2\perp e_\gamma$ for all $\gamma\in\Gamma$, it follows that the set $\{e_\gamma : \gamma\in\Gamma\}$ cannot be complete. 
\end{example}

\section{Conclusions and open problems}\label{sec4}

While the literature on frame systems in Hilbert space is vast, see for example \cite{CaFi09,CaWe08}, and even in Banach space \cite{CaCh08}, our focus here is on the case when both the Hilbert space and the choice of vectors are restricted. We take the Hilbert space to be $L^2(\mu)$ where the family of measures is as outlined above, and we take the vectors to be the complex exponentials of Fourier analysis; Fourier frames.

      It was recently discovered that an important problem in operator algebras, the Kadison-Singer conjecture \cite{KS59} is equivalent to important open problems for frames; see e.g., \cite{CaWe08}. Our present restricted context for frame computations appears to be a fertile ground for generating the kind of singular frames that are likely to have a bearing on Kadison-Singer in its frame incarnations. But this means that there are relatively more technical details involved in the search for examples of Fourier frames satisfying one or the other of the frame estimates that subdivide the subject.  Below we include a table of cases, and an overview of what is known, and what is still open.

Let $\mu$ be in $\M_1$ and $\Gamma$ a discrete subset of $\br^d$. Define the function 
\begin{equation}
\sigma_{\Gamma}(t):=\sum_{\gamma\in\Gamma}|\widehat\mu(t-\gamma)|^2,\quad(t\in\br^d).
\label{eq5.5.1}
\end{equation}
and let 
\begin{equation}
E(\Gamma):=\{e_\gamma : \gamma\in\Gamma\}.
\label{eq5.5.2}
\end{equation}
The function $\sigma_\Gamma$ plays an central role in the study of sequences of exponential functions in $\br^d$. We review some of its properties here, and we list some open questions related to it.

\begin{proposition}\label{pr5.1}
Let $\mu$ and $\Gamma$ be as above. The function $\sigma_\Gamma$ has the following properties:
\begin{enumerate}
	\item $E(\Gamma)$ is an ONB for $L^2(\mu)$ if and only if $\sigma_\Gamma\equiv 1$. 
	\item $E(\Gamma)$ is an orthonormal set in $L^2(\mu)$ if and only if $\sigma_\Gamma\leq 1$.
	\item  $E(\Gamma)$ is a maximal orthonormal set of exponentials if and only if $0<\sigma_\Gamma\leq 1$.
	\item  If $E(\Gamma)$ is a Bessel sequence with bound $B>0$ then $\sigma_\Gamma\leq B$. 
	\item If $E(\Gamma)$ is a frame with bounds $A,B>0$ then $A\leq \sigma_\Gamma\leq B$. 
	\item $E(\Gamma)$ is a Riesz basic sequence with bounds $A,B>0$ if and only if the self-adjoint matrix 
	\begin{equation}
\mathcal G_\Gamma:=\left(\widehat\mu(\gamma-\gamma')\right)_{\gamma,\gamma'\in\Gamma}
\label{eq5.1.2}
\end{equation}
satisfies $AI_{l^2(\Gamma)}\leq \mathcal G_\Gamma\leq BI_{l^2(\Gamma)}$. 
\end{enumerate}
\end{proposition}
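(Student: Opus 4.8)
The plan rests on a single computation of inner products of exponentials. In the paper's convention for $L^2(\mu)$ one has $\ip{e_\gamma}{e_{\gamma'}}_{L^2(\mu)}=\int e^{2\pi i(\gamma'-\gamma)x}\,d\mu(x)=\widehat\mu(\gamma'-\gamma)$, and more generally $\ip{e_\gamma}{e_t}_{L^2(\mu)}=\widehat\mu(t-\gamma)$ for any $t\in\br^d$. Since $\mu$ is a probability measure, $\|e_t\|_{L^2(\mu)}^2=\widehat\mu(0)=1$, so in particular every $e_\gamma$ is automatically a unit vector. Because $\widehat\mu(-s)=\overline{\widehat\mu(s)}$, we obtain the key identity
\begin{equation*}
\sigma_\Gamma(t)=\sum_{\gamma\in\Gamma}|\widehat\mu(t-\gamma)|^2=\sum_{\gamma\in\Gamma}\left|\ip{e_\gamma}{e_t}_{L^2(\mu)}\right|^2,
\end{equation*}
which exhibits $\sigma_\Gamma(t)$ as the analysis (Bessel) sum of the system $E(\Gamma)$ evaluated on the single test vector $e_t$; this is the engine behind items (i)--(v). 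In parallel, expanding a finite synthesis norm and using $\ip{e_\gamma}{e_{\gamma'}}_{L^2(\mu)}=\widehat\mu(\gamma'-\gamma)$ identifies
\begin{equation*}
\Big\|\sum_{\gamma\in\Gamma}c_\gamma e_\gamma\Big\|_{L^2(\mu)}^2=\sum_{\gamma,\gamma'\in\Gamma}\overline{c_\gamma}\,c_{\gamma'}\,\widehat\mu(\gamma'-\gamma)=\ip{\mathcal G_\Gamma c}{c}_{l^2(\Gamma)},
\end{equation*}
so $\mathcal G_\Gamma$ is precisely the Gram matrix of $E(\Gamma)$, and its Hermitian form computes the synthesis norm; this drives (vi).

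With these in hand, most items are immediate. Items (i) and (ii) are restatements of Lemma \ref{lem3.2}(ii) and (i): since each $e_\gamma$ is a unit vector, "orthogonal" and "orthonormal" coincide, and the sets $\M^{OB}(\Gamma)$ and $\M^\perp(\Gamma)$ are cut out exactly by $\sigma_\Gamma\equiv 1$ and $\sigma_\Gamma\leq 1$. For (iv) I would substitute $f=e_t$ into the Bessel inequality: the left side equals $\sigma_\Gamma(t)$ and the right side is $B\|e_t\|^2=B$, giving $\sigma_\Gamma\leq B$ (the argument indicated for Lemma \ref{lem3.2}(iii)). For (v) I substitute $f=e_t$ into both frame inequalities simultaneously to get $A\leq\sigma_\Gamma(t)\leq B$ pointwise.

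For (iii) the orthonormality half is (ii), so the content is the equivalence of maximality with strict positivity of $\sigma_\Gamma$, where I interpret \emph{maximal} as maximal among orthonormal sets of exponentials $\{e_t:t\in\br^d\}$. If $\sigma_\Gamma(t_0)=0$ for some $t_0$, then $\widehat\mu(t_0-\gamma)=0$ for every $\gamma$, so $e_{t_0}\perp e_\gamma$ for all $\gamma$; moreover $t_0\notin\Gamma$ (else $\sigma_\Gamma(t_0)\geq|\widehat\mu(0)|^2=1$), so $E(\Gamma)\cup\{e_{t_0}\}$ is a strictly larger orthonormal system and $E(\Gamma)$ is not maximal. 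Conversely, if $\sigma_\Gamma>0$ everywhere, then for each $t$ some $\widehat\mu(t-\gamma)\neq 0$, i.e.\ $e_t$ is not orthogonal to $E(\Gamma)$, so no exponential can be adjoined and $E(\Gamma)$ is maximal.

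For (vi) I use the Gram identity above: the Riesz bounds $A\sum|c_\gamma|^2\leq\|\sum c_\gamma e_\gamma\|^2\leq B\sum|c_\gamma|^2$ for finitely supported $c$ translate verbatim into $A\|c\|^2\leq\ip{\mathcal G_\Gamma c}{c}\leq B\|c\|^2$ on finitely supported $c$; the upper bound makes $\mathcal G_\Gamma$ a bounded self-adjoint operator on $l^2(\Gamma)$, and both inequalities then extend by density to all of $l^2(\Gamma)$, which is exactly $A I_{l^2(\Gamma)}\leq\mathcal G_\Gamma\leq B I_{l^2(\Gamma)}$; the converse follows by restricting to finitely supported $c$. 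I expect the only genuine subtlety to be in (vi): one must first secure that $\mathcal G_\Gamma$ defines an honest bounded operator, not merely a formal matrix, before the spectral inequality is even meaningful, and this is precisely what the upper Riesz bound supplies, together with a density argument to pass from finite to infinite sequences. Fixing the correct reading of "maximal" in (iii) is the only other point requiring care, and once settled the argument is elementary.
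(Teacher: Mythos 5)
Your proposal is correct, and where the paper actually gives arguments it is essentially the same proof: items (i), (ii), (iv) are handled by deferring to Lemma \ref{lem3.2} (your substitution $f=e_t$ for the Bessel bound is exactly the paper's proof of Lemma \ref{lem3.2}(iii)), and your two-sided argument for (iii) --- adjoining $e_{t_0}$ when $\sigma_\Gamma(t_0)=0$, and conversely noting that $\sigma_\Gamma>0$ blocks any orthogonal $e_t$ --- is word-for-word the paper's reasoning, including the correct reading of ``maximal'' as maximal among orthonormal sets of exponentials. The only divergence is that for (v) and (vi) the paper simply cites \cite{DJ06,DHSW10,DHW10}, whereas you supply the standard arguments inline: substituting $f=e_t$ into both frame inequalities for (v), and identifying $\mathcal G_\Gamma$ as the Gram matrix of $E(\Gamma)$ for (vi), with the correct caveat that the upper quadratic-form bound on finitely supported sequences is what makes $\mathcal G_\Gamma$ a bounded self-adjoint operator before the operator inequalities can be stated and extended by density. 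These inline proofs are sound and match the paper's conventions ($\widehat\mu(t-\gamma)=\ip{e_\gamma}{e_t}_{L^2(\mu)}$, $\|e_t\|_{L^2(\mu)}=1$), so your write-up is a self-contained version of what the paper outsources to references.
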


The statements (i), (ii), and (iv) just repeat Lemma \ref{lem3.2}. For a proof of (v), (vi) see \cite{DJ06,DHSW10,DHW10}. To prove (iii), we see that if $\sigma_\Gamma>0$, and if $\gamma'\not\in\Gamma$, then there is a $\gamma\in\Gamma$ such that $\widehat\mu(\gamma-\gamma')\neq 0$, so $e_{\gamma'}$ is not orthogonal to $e_\gamma$. Hence $E(\Gamma)$ is maximal. Conversely, if this set is maximal, then we cannot have $\sigma_\Gamma(t)=0$ because that would imply that $e_t$ is orthogonal to all $e_\gamma$ with $\gamma\in\Gamma$.

Here are some known results related to Proposition \ref{pr5.1}. We denote by $\mu_4$ the measure in the Jorgensen-Pedersen example \cite{JoPe98}, i.e., 
the invariant measure for the affine IFS with $R=4$ and $B=\{0,2\}$, and by $\mu_3$ the middle third Cantor measure, i.e., $R=3$, $B=\{0,2\}$. 

\begin{enumerate}
\item \cite{JoPe98,DJ06,DHS09} There are infinitely many sets $\Gamma$ that contain $0$ such that $E(\Gamma)$ is an ONB for $L^2(\mu_4)$. 
\item \cite{JoPe98} There are no sets $\Gamma$ with 3 or more elements such that $E(\Gamma)$ is orthogonal in $L^2(\mu_3)$. 
\item \cite{DHS09} There are maximal sets of orthogonal exponentials in $L^2(\mu_4)$ which are not ONBs for $L^2(\mu_4)$. 
\item \cite{DHW10} There are sets $\Gamma$ of positive Beurling dimension such that $E(\Gamma)$ is a Bessel sequence in $L^2(\mu_3)$.
\item \cite{DHW10} There are sets $\Gamma$ of positive Beurling dimension such that $E(\Gamma)$ is a Riesz basic sequence in $L^2(\mu_3)$.
\end{enumerate}

Here is a list of questions belonging to the same circle of ideas:
\begin{questions} The following question are still open at the time this paper was written:
\begin{enumerate}
	\item Does the converse of Proposition \ref{pr5.1} (iv) hold?
	\item Does the converse of Proposition \ref{pr5.1} (v) hold?
	\item Are there any sets $\Gamma$ such that $E(\Gamma)$ is a frame for $L^2(\mu_3)$?
	\item Are there any sets $\Gamma$ such that $E(\Gamma)$ is a Riesz basis for $L^2(\mu_3)$?
	
\end{enumerate}
\end{questions}

\begin{acknowledgements}
We are grateful to an anonymous referee for a list of very helpful suggestions. 
\end{acknowledgements}
 \bibliographystyle{alpha}
\bibliography{kernel}

\end{document}